\documentclass{amsart}

\usepackage{amsmath,amssymb,mathtools,tikz,lineno}
\usetikzlibrary{arrows,automata,decorations.pathreplacing,decorations.pathmorphing,calligraphy,patterns,calc}
%\linenumbers

\newcommand{\R}{\mathbb{R}}
\newcommand{\Z}{\mathbb{Z}}
\newcommand{\N}{\mathbb{N}}

\newcommand{\sedge}[1]{\tikz[baseline=-0.5ex]{\draw (0,0) -- node[above] {$#1$} (2em,0);}} % \stackrel{#1}{\rule[3pt]{2em}{.5pt}}}
\newcommand{\dedge}[2]{\tikz[baseline=-0.5ex,every loop/.style={}]{\draw (0,0) edge[loop above] node[above] {$#1$} () edge [bend left=10] node[below] {$#2$} (3em,0) edge[bend right=10] (3em,0) (3em,0) edge[loop above] node[above] {$#1$} ();}}

\newenvironment{fsa}[1][auto]{\begin{tikzpicture}[->,>=stealth',
    shorten >=1pt,auto,node distance=3cm,double distance between line centers=0.45ex,
    initial text=,accepting/.style=accepting by arrow,
    every state/.style={inner sep=3pt,minimum size=6mm},
    every loop/.style={looseness=12},semithick,#1]}{\end{tikzpicture}}

\newtheorem{theorem}{Theorem}[section]
\newtheorem{lemma}[theorem]{Lemma}
\newtheorem{proposition}[theorem]{Proposition}
\newtheorem{corollary}[theorem]{Corollary}

\theoremstyle{definition}
\newtheorem{defi}[theorem]{Definition}

\title{Growth of groups with linear Schreier graphs}
\author{Laurent Bartholdi}
\author{Volodymyr Nekrashevych}
\author{Tianyi Zheng}
\date{\today}

\begin{document}

\maketitle

\section{Introduction}

We introduce here a new method of proving upper estimates of growth of finitely generated groups and constructing groups of intermediate growth using  graphs of their actions. If a group $G$ generated by a finite set $S$ acts from the right on a set $V$, then the \emph{graph of the action} $\Gamma$ is the graph with vertex set $V$ and for every generator $s\in S$ and every $x\in V$ an arrow starting in $x$, ending in $x\cdot s$, and labeled $s$. The action (and thus the group if the action is faithful) is uniquely determined by the labeled graph $\Gamma$. If we want to know the image of a point $x\in V$ under the action of an element $g=s_1s_2\ldots s_\ell$, where $s_i\in S\cup S^{-1}$, we just have to find the unique path $\gamma_{x, g}$ starting in $x$ whose consecutive edges are labeled by $s_1, s_2, \ldots, s_\ell$, where traversing an edge labeled by $s$ against its orientation corresponds to the label $s^{-1}$.

Note that $x\cdot g$ depends only on the corresponding path $\gamma_{x, g}$, so $x\cdot g$ depends only on a finite subgraph of $\Gamma$. In particular, it depends only on the isomorphism class (as a labeled rooted graph) of the ball of radius $n$ around $x$. Therefore, the element $g$ can be completely described by a finite collection of finite connected bi-rooted labeled graphs $\{(\Delta_i, x_i, y_i)\}_{i\in I}$ such that for every $x\in V$ there exists $i\in I$ and a morphism of labeled graphs $\phi_i\colon\Delta_i\to\Gamma$ with $\phi_i(x_i)=x$, and for every such morphism we have $\phi_i(y_i)=x\cdot g$. We call such a collection a \emph{portrait} of $g$.

One of the ingredients of our method is exploiting \emph{quasiperiodicity} (in form of \emph{linear repetitivity} of $\Gamma$), which together with the condition of polynomial growth of $\Gamma$ (all graphs in our examples will actually have linear growth) implies low (polynomial) \emph{complexity} of $\Gamma$. The latter conditions ensures that there are as few as possible (namely at most a polynomial function of $n$) isomorphism classes of balls of radius $n$ in the graph $\Gamma$. This way we will know that the elements of the portrait of an element $g\in G$ are drawn from a set of controlled size.

Merely quasiperiodicity and low complexity are not enough to guarantee sub-exponential growth of the group (though they may suffice to ensure other finiteness conditions, see for example~\cite{mattebon:liouville}). However, we observe that instead of the ball of radius $n$ around $x$ we may take any ball centered in $x$ that contains the path $\gamma_{x, g}$. The second ingredient of the method is ensuring that most paths $\gamma_{x, g}$ have small diameter, by showing that they have a lot of back-tracking.

Similar ideas were used in the paper~\cite{nek:burnside} to construct the first examples of simple groups of intermediate growth. They also appeared implicitly in the papers~\cite{BE:permextensions,BE:givengrowth} in the form of estimates of \emph{inverted orbits}.

We make these arguments more systematic and study the sizes of portraits of elements of the group using generating functions measuring complexity of portraits. An equivalent generating function counts ``traverses'' of specially chosen subgraphs  of $\Gamma$. Each of these subgraphs has marked vertices on their boundary called ``entrances'' and ``exits.'' A subword $s_is_{i+1}\cdots s_j$ of a group word $s_1s_2\ldots s_\ell$ is called a \emph{traverse} of the subgraph $I$ if a path corresponding to the subword starting in an entrance stays all the time inside $I$, ends in an exit, and does not include any boundary point of $I$ in-between.
The action graphs $\Gamma$ are in our paper infinite chains (with multiple edges and loops), and the special subgraphs $I$ are its sub-segments. One of the two endpoints of $I$ is marked as the entrance, the other one is the exit.

More precisely, let us fix throughout this paragraph a long word  $g=s_1s_2\ldots s_\ell$. In order for the path $\gamma_{x, g}$ to have a large diameter, it must contain traverses of a long segment $I$, since the isomorphic copies of $I$ appear quasiperiodically in $\Gamma$ (i.e., with relatively short gaps between consecutive occurrences). One can use this fact to estimate the generating function describing the portrait of $g$ by the generating function $F_g(x)$ counting the numbers of traverses of some chosen segments $I$.

If $I\hookrightarrow J$ is a fixed embedding of one interval into another, then every traverse $s_i\ldots s_j$ of $J$ contains (as a subword) a traverse of $I$. Choosing the first one, we obtain a map from the set of traverses of $J$ to the set of traverses of $I$. It is easy to see that this map is injective. Combining several maps of this form we get an oriented graph whose set of vertices is the set of traverses. The main idea of the estimates for the generating function $F_g(t)$ is to produce explicit inequalities between the in-degree and out-degrees of this graph, thereby proving that the number of traverses of segments $I$ decreases quickly as the length of $I$ increases.

Such inequalities are proved by exploiting a ``U-turn'' effect: if a subword $s_is_{i+1}\ldots s_j$ is a traverse of one of the chosen segments $I$, then for another chosen segment $J$ it is not a traverse, since the corresponding path must exit $J$ through the entrance. This will show that the injective maps, described above are substantially non-surjective, i.e., that the in-degrees of vertices of the graph are substantially smaller than the out-degrees.

We illustrate the method first by giving a much better estimate of the form $\exp(R^\alpha)$, $\alpha<1$ for the growth function $\gamma(R)$, in the case of the groups considered in~\cite{nek:burnside} (the original estimate in~\cite{nek:burnside} was $\le\exp(n/\exp(C\sqrt{\log R}))$). We show that our method gives the sharp upper estimate for the limit $\lim_{R\to\infty}\frac{\log\log\gamma(R)}{R}=0.76743\ldots$ for the growth $\gamma(R)$ of the Grigorchuk group (known from~\cite{bartholdi:upperbd,muchnpak:growth} and~\cite{erschlerzheng:grigorchukgroup}). We also give upper bounds of the form $\exp(R^\alpha)$ with explicit $\alpha$ for two more examples: the fragmentation of the golden mean dihedral group from~\cite{nek:burnside} and for a simple group containing the Grigorchuk group from~\cite{nek:simplegrowth}.

\section{Growth estimates via graphs of action}
\subsection{Portraits}
Let $G$ be a group generated by a finite symmetric set $S$. For $g\in G$ we denote by $|g|$ the minimal $\ell\in\N$ such that $g$ may be written as a product $g=s_1\cdots s_\ell$ with $s_i\in S$, and we let $\gamma_G(R)$ denote the number of group elements $g\in G$ with $|g|\le R$. We use the standard notation from theory of growth of groups: $f_1(R)\preceq f_2(R)$ means that there exists $C\ge1$ such that $f_1(R)\le f_2(CR)$ for all $R\ge 1$, and $f_1\sim f_2$ means $f_1\preceq f_2$ and $f_2\preceq f_1$.

Suppose that $G$ acts faithfully and transitively from the right on a set $V$. Let $\Gamma$ be the \emph{graph of the action}, defined as the labeled directed graph with set of vertices $V$ and set of arrows $V\times S$, where an arrow $(x, s)$ starts in $x$, ends in $x\cdot s$, and is labeled $s$. More generally, for an $S$-labelled graph, a word $w=s_1\cdots s_\ell\in S^*$ and a vertex $x$,  we introduce the following notation: $x\cdot w$, when defined, is the unique vertex reached by following from $x$ by following in turn edges labeled $s_1,\dots,s_\ell$. Note that this notation coincides with the right action of $G$ in the case of the graph of an action.

Let $w=s_1\cdots s_\ell$ be a word in $S$, and let $g$ be the corresponding element  of $G$. For a vertex $x\in\Gamma$ consider the trajectory $x, x\cdot s_1, x\cdot s_1s_2, \ldots, x\cdot s_1s_2\cdots s_\ell$, and let $\Delta_{w, x}$ be the subgraph of $\Gamma$ spanned by the corresponding labeled edges $(x\cdot s_1\cdots s_i,s_{i+1})$. For every morphism $\phi\colon\Delta_{w, x}\to\Gamma$ of labeled graphs we have $\phi(x)\cdot g=\phi(x\cdot g)$. Moreover, this is also true if we replace $\Delta_{w, x}$ by any subgraph of $\Gamma$ containing $\Delta_{w, x}$.

\begin{defi}
A \emph{portrait} of a group element $g\in G$ is a finite set $\mathcal{P}$ of finite bi-rooted labeled graphs such that the following two conditions are satisfied:
\begin{enumerate}
\item for every vertex $v\in\Gamma$ there exists a graph $(\Delta, x, y)\in\mathcal{P}$ and a morphism of rooted labeled graphs $\phi\colon(\Delta, x)\to (\Gamma, v)$;
\item if $(\Delta, x, y)\in\mathcal{P}$ and $\phi\colon(\Delta, x)\to (\Gamma, v)$ is a morphism, then $\phi(y)=\phi(x)\cdot g$.
\end{enumerate}
\end{defi}

For example, choose any expression $g=s_1\cdots s_\ell$ of $g$ as a product of generators; then the set consisting of a single graph representing the chain of edges labeled $s_1,\ldots, s_\ell$ is a portrait of $g$.

We are interested in portraits whose elements are taken from a restricted collection of graphs. For example, we may restrict ourselves to graphs of the form $(B_{x}(R), x, y)$, where $B_{x}(R)$ is the subgraph of $\Gamma$ spanned by the ball of radius $R$ centered in $x\in\Gamma$, and $y$ is a vertex of $B_{x}(R)$. %In some cases collections of other sets can be used instead of balls. 
We shall make use of the following choice.
\begin{defi}
  Let $w=s_1s_2\ldots s_\ell$ be a word in $S$, and let $v$ be a vertex of $\Gamma$. Denote by $R_w(v)$ be the maximum over $0\le i\le n$ of the distances $d(v, v\cdot s_1s_2\cdots s_i)$. Then the \emph{standard portrait} $\mathcal{P}_w$ of $w$ is defined as the collection of the (isomorphism classes of the) bi-rooted graphs $(B_v(R_w(v)), v, v\cdot w)$. In other words, it is the collection of minimal-radius balls that cover the path labeled $w$ starting at their center. It is easy to see that $\mathcal P_w$ is a portrait of the product $g=s_1s_2\cdots s_\ell\in G$. 
\end{defi}

For $g\in G$, denote by $L(g)$ the minimal size of standard portraits $\mathcal{P}_w$ among all words $w$ representing $g$; and set $L(R)=\max_{|g|\le R} L(g)$.

Let $\gamma_\Gamma(R)=\max_{v\in\Gamma}\#B_v(R)$ be the maximal size of a ball of radius $R$ in $\Gamma$, and let $\delta_\Gamma(R)$ denote the number of isomorphism classes of rooted graphs $(B_v(R), v)$. We then have the following straightforward estimate of the growth of $G$:
\begin{proposition}
\label{pr:growth1}
The growth $\gamma_G(R)$ of $G$ satisfies
\[\gamma_G(R)\le (\delta_\Gamma(R)\gamma_\Gamma(R))^{L(R)}.\]
\end{proposition}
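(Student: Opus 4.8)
The plan is to bound $\gamma_G(R)$ by the number of suitable portraits, using that any portrait of $g$ recovers $g$. Concretely, if $\mathcal P$ is a portrait of $g$ and $v\in\Gamma$ is any vertex, then condition~(1) supplies $(\Delta,x,y)\in\mathcal P$ and a morphism $\phi\colon(\Delta,x)\to(\Gamma,v)$, and condition~(2) gives $v\cdot g=\phi(x)\cdot g=\phi(y)$; since the action is faithful, $g$ is determined by $\mathcal P$. Hence, attaching to each $g$ one fixed portrait $\mathcal P(g)$, the assignment $g\mapsto\mathcal P(g)$ is injective, so $\gamma_G(R)$ is at most the number of portraits $\mathcal P(g)$ occurring for elements $g$ with $|g|\le R$.

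First I would choose $\mathcal P(g)$. For $g$ with $|g|\le R$, pick a word $w$ representing $g$ with $|\mathcal P_w|=L(g)$, taken moreover geodesic, so that $|w|=|g|\le R$; then $R_w(v)\le|w|\le R$ for every vertex $v$, and the standard portrait $\mathcal P_w$ consists of bi-rooted balls $(B_v(R_w(v)),v,v\cdot w)$ of radius at most $R$. Enlarging each such ball to radius exactly $R$ still yields a portrait of $g$ (a morphism out of the enlarged subgraph restricts to one out of $\Delta_{w,v}$, by the remark preceding the definition of portrait), and enlarging balls can only merge isomorphism classes, so the portrait $\mathcal P(g)$ thus obtained has at most $|\mathcal P_w|=L(g)\le L(R)$ elements, each of the form $(B_v(R),v,v\cdot g)$.

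Next I would count. An isomorphism class of bi-rooted ball $(B_v(R),v,y)$ with $y\in B_v(R)$ is specified by the isomorphism class of the rooted ball $(B_v(R),v)$ — at most $\delta_\Gamma(R)$ choices — together with a second root among the at most $\gamma_\Gamma(R)$ vertices of $B_v(R)$: at most $\delta_\Gamma(R)\gamma_\Gamma(R)$ possibilities in all. Thus $\mathcal P(g)$ is a nonempty subset of size at most $L(R)$ of a set of cardinality at most $\delta_\Gamma(R)\gamma_\Gamma(R)$, and there are at most $(\delta_\Gamma(R)\gamma_\Gamma(R))^{L(R)}$ such subsets (realize each as the image of a map from a fixed $L(R)$-element set). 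Combined with the injectivity of $g\mapsto\mathcal P(g)$, this gives the claimed inequality.

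The part that is not mere bookkeeping is the passage to a geodesic word: a priori the minimal standard portrait could be realized only by long, non-geodesic words, whose covering balls exceed radius $R$. I would deal with this through the monotonicity principle that if $w$ and $w'$ represent the same element and $R_{w'}(v)\le R_w(v)$ for all $v$, then $|\mathcal P_{w'}|\le|\mathcal P_w|$ — an isomorphism between the corresponding larger bi-rooted balls restricts to one between the nested smaller ones, so $\mathcal P_{w'}$ is a quotient of $\mathcal P_w$ — applied to shortening moves such as deleting a subword of $w$ that is trivial in $G$, for which $R_{w'}(v)\le R_w(v)$ is immediate since the trajectory of $w'$ at $v$ is that of $w$ with a loop removed. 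Carrying such reductions all the way down to a genuine geodesic word is the step I expect to require the most care in general, although it is transparent for the chain graphs $\Gamma$ studied later in the paper.
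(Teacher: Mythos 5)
Your overall strategy is the paper's: a portrait determines $g$ by faithfulness, so it suffices to count portraits consisting of at most $L(R)$ elements drawn from a pool of size $\delta_\Gamma(R)\gamma_\Gamma(R)$; your explicit injectivity argument at the start is a welcome clarification of what the paper leaves implicit. However, the step where you replace each ball $B_v(R_w(v))$ by $B_v(R)$ contains a genuine error: enlarging balls can only \emph{split} isomorphism classes, never merge them. Indeed, if $(B_v(R),v,v\cdot g)\cong(B_u(R),u,u\cdot g)$, then the isomorphism carries the $w$-trajectory at $v$ (which lies in $B_v(R)$ since $R_w(v)\le|w|\le R$ and is determined step by step by the edge labels) to the $w$-trajectory at $u$; hence $R_w(v)=R_w(u)=:r$ and the isomorphism restricts to one of $(B_v(r),v,v\cdot g)$ with $(B_u(r),u,u\cdot g)$. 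So the partition of vertices by radius-$R$ classes \emph{refines} the partition by minimal-radius classes, and your enlarged portrait $\mathcal P(g)$ has \emph{at least} $|\mathcal P_w|$ elements --- possibly on the order of $\delta_\Gamma(R)$ of them, which would only yield the useless bound $(\delta_\Gamma\gamma_\Gamma)^{\delta_\Gamma}$. The implication you actually need (isomorphic small balls have isomorphic radius-$R$ enlargements) is false. The paper's proof avoids enlargement: it keeps the elements $(B_x(r),x,y)$ with $r=R_w(x)$ and observes that the class of $(B_x(r),x)$ is the restriction of the class of $(B_x(R),x)$, with $r$ recovered from that class and $w$ as the minimal radius of a ball containing the trajectory; hence at most $\delta_\Gamma(R)$ such classes occur, and your count of the second root $y$ then goes through verbatim.

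On the secondary point you flag: your monotonicity principle (if $R_{w'}(v)\le R_w(v)$ for all $v$ then $|\mathcal P_{w'}|\le|\mathcal P_w|$) is correct, but the proposed reduction to a geodesic word by deleting subwords trivial in $G$ fails in general --- a non-geodesic word need not contain any proper subword representing the identity when relators are long (e.g.\ $ababa=b^{-1}$ in $\langle a,b\mid(ab)^3\rangle$). This, though, reflects an imprecision in the definition of $L(g)$ (the minimizing word could a priori be longer than $R$) that the paper's own proof also passes over by simply working with words of length at most $R$; it is not where the real problem with your argument lies.
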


In particular, if $\delta_\Gamma(R)$ and $\gamma_\Gamma(R)$ are bounded above by polynomials, and $L(R)$ is bounded by $R^\alpha$ for some $\alpha\in (0, 1)$, then the growth of $G$ is bounded from above by $\exp(\log R\cdot R^\alpha)$.

\begin{proof}
Let $\mathcal{P}_w$ be the standard portrait of a word $w\in S^*$, with $|w|\le R$. Then for each $(B_{x}(r), x, y)\in\mathcal{P}_w$ we have $r\le |w|$, so the number of choices for $(B_{x}(r), x)$ is at most $\delta_\Gamma(R)$: the isomorphism type of $(B_{x}(r),x)$ is determined by that of $(B_{x}(R),x)$, and $r$ is determined as the minimal radius of a ball centered at $x$ and containing the path $w$. For each $(B_{x}(r), x)$ the number of choices for $y$ is at most $\#B_{x}(r)\le\gamma_\Gamma(R)$. It follows that each element of $\mathcal{P}_w$ is drawn from a set consisting of at most $\delta_\Gamma(R)\gamma_\Gamma(R)$ elements. Since we draw at most $L(R)$ elements, it follows that the number of possible portraits $\mathcal{P}_w$ with $|w|\le R$ is at most $(\delta_\Gamma(R)\gamma_\Gamma(R))^{L(R)}$.
\end{proof}

Consider the standard portrait $\mathcal{P}_w$ of a word $w\in S^*$, and define
\[N_p(w)=\left(\sum_{(B, x, y)\in\mathcal{P}_w}(\#B)^{p-1}\right)^{1/p}.\]
Let $N_p(g)$ be the infimum of $N_p(w)$ over all words $w$ representing $g$. Note $N_1(w)=\#\mathcal{P}_w$, so $N_1(g)=L(g)$.

Let us assume that the graph $\Gamma$ satisfies the following two conditions:
\begin{enumerate}
\item there exist constants $C, d\ge1$ such that for every vertex $v\in\Gamma$ and every $R\ge 1$ we have
\[C^{-1}R^d\le\#B_v(R)\le CR^d;\]
\item the graph $\Gamma$ is linearly repetitive, namely, there exists a constant $L>1$ such that for every vertices $v, v'\in\Gamma$ and every $R>0$ there exists an isomorphic embedding of $B_{v}(R)$ into $B_{v'}(LR)$.
\end{enumerate}

\begin{lemma}\label{lem:delta}
  Under these assumptions, there is a constant $K$ such that $\delta_\Gamma(R)\le K R^d$.
\end{lemma}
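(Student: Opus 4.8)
The plan is to estimate the number of isomorphism classes of rooted balls $(B_v(R),v)$ by decomposing the datum of such a ball into two pieces: the underlying unlabeled shape of a ball of a fixed larger radius, and the position of $v$ inside it. First I would use linear repetitivity to reduce everything to a single reference ball. Fix a base vertex $v_0$. By condition (2), for every vertex $v$ there is an isomorphic embedding $\iota\colon B_v(R)\hookrightarrow B_{v_0}(LR)$. Hence the rooted labeled ball $(B_v(R),v)$ is isomorphic to the rooted labeled subgraph $(\iota(B_v(R)),\iota(v))$ of the fixed graph $B_{v_0}(LR)$. So every isomorphism class of $(B_v(R),v)$ is realized as a \emph{sub-ball} of $B_{v_0}(LR)$, and is therefore determined by the choice of its center $\iota(v)$ as a vertex of $B_{v_0}(LR)$: indeed, once the center $u=\iota(v)\in B_{v_0}(LR)$ is fixed, the labeled subgraph of $B_{v_0}(LR)$ induced on the ball of radius $R$ around $u$ is completely determined (we must be slightly careful near the boundary of $B_{v_0}(LR)$, but since $R\le LR$ is a fixed fraction and we only need an upper bound, we may instead embed $B_v(R)$ into $B_{v_0}(2LR)$ or simply note that $B_u(R)\cap B_{v_0}(LR)$ already contains an isomorphic copy of $B_v(R)$ when $u=\iota(v)$, so the ambient ball determines it).

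The key step is then the counting: the number of isomorphism classes of $(B_v(R),v)$ is at most the number of vertices of $B_{v_0}(LR)$, which by condition (1) is at most $\#B_{v_0}(LR)\le C(LR)^d = CL^d R^d$. Setting $K=CL^d$ gives $\delta_\Gamma(R)\le KR^d$, as claimed.

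I expect the main obstacle to be the boundary issue in the reduction above: a ball $B_v(R)$ embedded near the boundary of the reference ball $B_{v_0}(LR)$ is not literally equal to "the ball of radius $R$ around $\iota(v)$ computed inside $B_{v_0}(LR)$," because some vertices within distance $R$ of $\iota(v)$ in $\Gamma$ may lie outside $B_{v_0}(LR)$. The clean fix is to apply linear repetitivity with a slightly larger target, embedding $B_v(2R)$ into $B_{v_0}(2LR)$; then $\iota(v)$ has all of $B_{\iota(v)}(R)$ contained in the image, so $(B_v(R),v)\cong(B_{\iota(v)}(R),\iota(v))$ with the latter ball computed in $\Gamma$ and contained in $B_{v_0}(2LR)$. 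The count then bounds $\delta_\Gamma(R)$ by $\#B_{v_0}(2LR)\le C(2L)^d R^d$, and we take $K=C(2L)^d$. (Using the graph $\Gamma$'s own metric for $B_{\iota(v)}(R)$ is legitimate precisely because the embedding is isometric on the relevant radius.) This is the only place where care is needed; the rest is a direct substitution of the polynomial volume bound.
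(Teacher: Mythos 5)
Your argument is essentially the paper's proof: fix a reference vertex $v_0$, use linear repetitivity to embed $B_v(R)$ into $B_{v_0}(LR)$, observe that the isomorphism type of $(B_v(R),v)$ is determined by the position of $\iota(v)$, and bound the number of such positions by $\#B_{v_0}(LR)\le C(LR)^d$. The boundary worry you raise is not actually an issue in an action graph (each vertex carries exactly one edge per label, so the embedded copy of $B_v(R)$ is automatically the ball $B_{\iota(v)}(R)$ computed in $\Gamma$), and in any case your conservative fix only inflates the constant $K$, which is harmless.
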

\begin{proof}
  Fix $v_0\in\Gamma$. Consider a ball $B_v(R)$ in $\Gamma$; then by~(2) there exists an  embedding $\iota\colon B_v(R)\hookrightarrow B_{v_0}(L R)$, and the isomorphism type of $B_v(R)$ is uniquely determined by $\iota(v)$, which by~(1) may assume at most $C(LR)^d$ values.
\end{proof}

\begin{proposition}\label{prop:p1p2}
For all $1\le p_1<p_2$ there exists a constant $C_{p_1, p_2}$ such that, for all $g\in G$,
\[N_{p_1}(g)\le C_{p_1, p_2}N_{p_2}(g).\]
\end{proposition}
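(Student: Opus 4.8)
The plan is to reduce to a per-word statement. Since $N_p(g)=\inf_w N_p(w)$ over words $w$ representing $g$, it suffices to find a constant $C_{p_1,p_2}$, depending only on $p_1,p_2$ and on the constants $C,d$ of the standing assumptions on $\Gamma$ and the constant $K$ of Lemma~\ref{lem:delta}, such that $N_{p_1}(w)\le C_{p_1,p_2}N_{p_2}(w)$ for \emph{every} word $w\in S^*$; taking the infimum over $w$ then yields the proposition. So fix $w$ and let $b_1\le b_2\le\cdots\le b_m$ be the sizes $\#B$ of the balls $B$ occurring (as isomorphism classes) in $\mathcal P_w$, so that $m=\#\mathcal P_w$.

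The crux is the estimate: \emph{there is a constant $K'$ such that for every word $w$ and every $t\ge1$ the portrait $\mathcal P_w$ contains at most $K't$ bi-rooted graphs $(B,x,y)$ with $\#B\le t$.} Its proof exploits the structure of standard portraits rather than just the abstract complexity bound. An element of $\mathcal P_w$ is of the form $(B_v(R_w(v)),v,v\cdot w)$, and since $\#B_v(R)\ge C^{-1}R^d$ the condition $\#B_v(R_w(v))\le t$ forces $R_w(v)\le r_0:=\lceil(Ct)^{1/d}\rceil$. The point is that the whole trajectory $v,v\cdot s_1,v\cdot s_1s_2,\dots,v\cdot w$ lies inside $B_v(R_w(v))\subseteq B_v(r_0)$, so any isomorphism of rooted labelled graphs $(B_v(r_0),v)\cong(B_{v'}(r_0),v')$ transports this trajectory onto the $w$-trajectory from $v'$ — here one uses that graph distances $\le r_0$ are computed correctly inside $B_v(r_0)$ — hence it matches $R_w(v)$ with $R_w(v')$ and the terminal vertices $v\cdot w$, $v'\cdot w$. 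Thus the isomorphism class of the bi-rooted graph $(B_v(R_w(v)),v,v\cdot w)$ is already determined by that of the \emph{single} rooted ball $(B_v(r_0),v)$, so the count is at most $\delta_\Gamma(r_0)\le Kr_0^d\le K't$ by Lemma~\ref{lem:delta}.

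Granting this, apply it with $t=b_j$ to get $j\le K'b_j$, i.e.\ $b_j\ge j/K'$ for all $j$. The remainder is elementary. If $p_1>1$, Hölder's inequality with conjugate exponents $\tfrac{p_2-1}{p_1-1}$ and $\tfrac{p_2-1}{p_2-p_1}$, applied to $\sum_j b_j^{p_1-1}\cdot 1$, gives
\[
\sum_{j} b_j^{p_1-1}\ \le\ \Bigl(\sum_{j}b_j^{p_2-1}\Bigr)^{\frac{p_1-1}{p_2-1}}m^{\frac{p_2-p_1}{p_2-1}},
\]
while $b_j\ge j/K'$ yields $\sum_j b_j^{p_2-1}\ge (K')^{-(p_2-1)}\sum_{j=1}^m j^{p_2-1}\ge c\,m^{p_2}$, hence $m\le c'\bigl(\sum_j b_j^{p_2-1}\bigr)^{1/p_2}$. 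Substituting the latter bound into the former, the exponents of $\sum_j b_j^{p_2-1}$ collapse exactly to $p_1/p_2$, so $\sum_j b_j^{p_1-1}\le C''\bigl(\sum_j b_j^{p_2-1}\bigr)^{p_1/p_2}$, that is $N_{p_1}(w)\le(C'')^{1/p_1}N_{p_2}(w)$. When $p_1=1$ this is even more direct: $N_1(w)=m\le c'\bigl(\sum_j b_j^{p_2-1}\bigr)^{1/p_2}=c'N_{p_2}(w)$.

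I expect the whole difficulty to sit in the counting estimate — in particular in the verification that an isomorphism of $r_0$-balls carries the entire $w$-trajectory, terminal vertex $v\cdot w$ included, along with it. It is essential there to bound the portrait elements of small balls by $\delta_\Gamma(r_0)$, the complexity of balls of a \emph{single} radius $r_0\asymp t^{1/d}$, which is $O(t)$: the naive bound by the number of isomorphism classes of balls of \emph{all} radii $\le r_0$ is of order $t^{(d+1)/d}$, and a short computation with the power means shows this weaker input would not suffice to force $N_{p_1}\preceq N_{p_2}$.
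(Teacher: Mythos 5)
Your proof is correct. The crux of your argument --- that the number of elements $(B,x,y)\in\mathcal P_w$ with $\#B\le t$ is $O(t)$, because such an element is already determined by the isomorphism class of the single rooted ball $(B_v(r_0),v)$ with $r_0\asymp t^{1/d}$, whose count is controlled by $\delta_\Gamma(r_0)\le Kr_0^d$ --- is exactly the counting step in the paper's proof (and in its proof of Proposition~\ref{pr:growth1}, where the determination of $R_w(v)$ and of $v\cdot w$ from the larger ball is spelled out). Where you differ is only in the elementary endgame: the paper truncates the sum $\sum(\#B)^{p_1-1}$ at a threshold $V$, bounds the small-ball part by $(\text{number of terms})\cdot V^{p_1-1}\le C_1V^{p_1}$ and the large-ball part by $V^{p_1-p_2}N_{p_2}(w)^{p_2}$, and then optimizes over $V$ with $V\asymp N_{p_2}(w)$; you instead sort the ball sizes, convert the counting bound into the pointwise estimate $b_j\ge j/K'$, and apply H\"older. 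The two routes are interchangeable --- both are the standard passage from a weak-type $\ell^1$ bound to the $\ell^p$ inequality --- and your exponent bookkeeping collapses correctly to $p_1/p_2$, including the separate $p_1=1$ case. Your closing remark is also on point: the paper likewise relies on reading off the radius $R_w(v)$ from the isomorphism class of a ball of one fixed radius rather than summing $\delta_\Gamma$ over all radii up to $r_0$, and the weaker count of order $t^{(d+1)/d}$ would indeed fail to give a constant independent of $w$.
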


\begin{proof}
It is enough to prove the inequality $N_{p_1}(w)\le C_{p_1, p_2}N_{p_2}(w)$ for every word $w\in S^*$.

Let $V$ be a positive number, which we will choose later. Split the sum in the definition of $N_{p_1}(w)^{p_1}$ into two parts: all summands $(\#B)^{p_1-1}$ with $\#B\le V$ and the others. 

If $v\in\Gamma$ and $R$ are such that $\#B_v(R)\le V$, then $R\le (CV)^{1/d}$. Every ball $B_v(R)$ appearing in the standard portrait $\mathcal{P}_w$ is uniquely determined by $v$ and $w$, so the number of elements $(B, x, y)\in\mathcal{P}_w$ with $\#B\le V$ is at most $\delta_\Gamma((CV)^{1/d})$, which by Lemma~\ref{lem:delta} is bounded by $C_1 V$. Consequently, the sum of the first part is at most $C^2L^d V^{p_1}$.

For each summand in the second part we have $\frac{(\#B)^{p_1-1}}{(\#B)^{p_2-1}}=
(\#B)^{p_1-p_2}\le V^{p_1-p_2}$, so the sum of the second part is at most 
$V^{p_1-p_2}N_{p_2}(w)^{p_2}$. We get
\[N_{p_1}(w)^{p_1}\le C_1V^{p_1}+V^{p_1-p_2}N_{p_2}(w)^{p_2}.\]
Choose now $V=\left(\frac{p_2-p_1}{C_1(p_1+1)}\right)^{1/p_2}N_{p_2}(w)$. We then get
\begin{multline*}N_{p_1}(w)^{p_1}\le\\ C_1\left(\frac{p_2-p_1}{C_1(p_1+1)}\right)^{p_1/p_2}N_{p_2}(w)^{p_1}+\left(\frac{p_2-p_1}{C_1(p_1+1)}\right)^{(p_1-p_2)/p_2}N_{p_2}(w)^{p_1}=\\
N_{p_2}(w)^{p_1}\cdot C_1^{(p_2-p_1)/p_2}\left(\left(\frac{p_2-p_1}{p_1}\right)^{p_1/p_2}+\left(\frac{p_2-p_1}{p_1}\right)^{(p_1-p_2)/p_2}\right),
\end{multline*}
which finishes the proof with $C_{p_1, p_2}=C_1^{\frac{1}{p_1}-\frac{1}{p_2}}\frac{(p_2-p_1)^{\frac{1}{p_1}+\frac{1}{p_2}}}{p_1^{\frac{1}{p_2}}p_2^{\frac{1}{p_1}}}$.
\end{proof}

\begin{corollary}
\label{cor:growth}
Suppose that the graph of the action $\Gamma$ satisfies the conditions (1) and (2).
Denote by $N_p(R)$ be the maximum of $N_p(g)$ for all elements $g\in G$ of length at most $R$. Then for every $p\ge 1$ there exists $K>0$ such that the growth of $G$ satisfies
\[\gamma_G(R)\le R^{K N_p(R)}.\]
\end{corollary}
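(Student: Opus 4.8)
The plan is to assemble the corollary from Proposition~\ref{pr:growth1}, Lemma~\ref{lem:delta}, hypothesis~(1), and Proposition~\ref{prop:p1p2}: essentially all the analytic content has already been isolated, so what remains is to feed these pieces into one another and to absorb multiplicative constants into the exponent.

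First I would bound the base of the exponential appearing in Proposition~\ref{pr:growth1}. By hypothesis~(1) we have $\gamma_\Gamma(R)=\max_{v}\#B_v(R)\le CR^d$, and by Lemma~\ref{lem:delta} we have $\delta_\Gamma(R)\le K'R^d$ for a suitable constant $K'$. Hence $\delta_\Gamma(R)\gamma_\Gamma(R)\le CK'R^{2d}$, and for all $R$ larger than some $R_0$ depending only on $C$ and $K'$ this is at most $R^{2d+1}$. Plugging this into Proposition~\ref{pr:growth1} gives $\gamma_G(R)\le R^{(2d+1)L(R)}$ for $R\ge R_0$.

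Second I would pass from $L$ to $N_p$. By the remark following the definition of $N_p$ one has $L(g)=N_1(g)$. If $p=1$ there is nothing further to do; if $p>1$, Proposition~\ref{prop:p1p2} applied with $p_1=1$ and $p_2=p$ produces a constant $C_{1,p}$ with $L(g)=N_1(g)\le C_{1,p}N_p(g)$ for every $g\in G$. Taking the maximum over all $g$ with $|g|\le R$ gives $L(R)\le C_{1,p}N_p(R)$, and combining with the bound from the previous step yields $\gamma_G(R)\le R^{(2d+1)C_{1,p}N_p(R)}$ for $R\ge R_0$, which is the asserted inequality with $K=(2d+1)C_{1,p}$.

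There is no genuine obstacle here; the corollary is a packaging of results already established, and the only point needing a little care is the bookkeeping with constants. Turning a bound of the shape $(\text{polynomial in }R)^{L(R)}$ into one of the shape $R^{K N_p(R)}$ requires both that the polynomial base be swallowed by a fixed power of $R$ — valid once $R\ge R_0$ — and that the exponent $N_p(R)$ be bounded below by a positive constant, which holds since every standard portrait is nonempty and every ball has at least one vertex, so $N_p(R)\ge 1$. The finitely many values $R<R_0$ then contribute only a bounded correction: enlarging $K$ so that $R^{K N_p(R)}\ge R^{K}$ exceeds $\gamma_G(R_0)$ for all $R\ge 2$ disposes of them, with the understanding — standard for growth functions — that the inequality is asserted for $R\ge 2$ (it cannot literally hold at $R=1$ for a nontrivial group).
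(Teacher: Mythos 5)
Your proof is correct and follows exactly the same route as the paper's: combine the bound $\delta_\Gamma(R)\gamma_\Gamma(R)\le C'R^{2d}$ from Lemma~\ref{lem:delta} and hypothesis~(1) with Proposition~\ref{pr:growth1}, then replace $L(R)=N_1(R)$ by $C_{1,p}N_p(R)$ via Proposition~\ref{prop:p1p2}. The only difference is that you spell out the constant bookkeeping (absorbing the polynomial base into $R^{2d+1}$, handling small $R$, noting $N_p(R)\ge1$) that the paper leaves implicit.
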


\begin{proof}
By Lemma~\ref{lem:delta} we get $\delta_\Gamma(R)\le C_1 R^d$. By (1) we also have $\gamma_\Gamma(R)\le C R^d$. By Proposition~\ref{prop:p1p2} we have $N_1(R)\le C_2N_p(R)$ for some constant $C_2$ not depending on $R$. The statement of the corollary follows then from Proposition~\ref{pr:growth1}.
\end{proof}

\subsection{Traverses of segments}
Suppose now that $\Gamma$ is an infinite chain, possibly with multiple edges and loops. A \emph{segment} $I$ of $\Gamma$ is a finite connected subgraph of $\Gamma$ induced by its set of vertices. The \emph{length} $|I|$ of a segment $I$ is the number of its vertices minus one. Each segment $I$ has two \emph{endpoints}. A \emph{direction} on $I$ is a choice of one endpoint as \emph{initial}. The other endpoint is called \emph{final}.

If we fix a direction (``left-to-right'') on the chain $\Gamma$, then each subsegment $I\subset\Gamma$ comes with the \emph{induced} direction, where the left endpoint is considered to be the initial one.

We assume that $\Gamma$ is linearly repetitive in the sense that isomorphic copies (such that the isomorphism preserves the induced direction on the segments) of every segment $I$ of $\Gamma$ appear with gaps between neighboring appearances of length bounded above by $C|I|$ for some fixed $C$.
Then $\Gamma$ satisfies the conditions (1) and (2) of the previous subsection.

Let us choose a sequence $I_n$ of directed segments of $\Gamma$ and assume that  there are constants $C_1, C_2>1$ such that 
\[C_1\le|I_{n+1}|/|I_n|\le C_2\]
for all $n\ge 1$. For example, we can take an arbitrary sequence of segments $I_n$ of lengths $2^n$.

Let us consider a word $s_1s_2\cdots s_\ell\in S^*$ assumed fixed throughout this section. A \emph{traverse} of $I_n$ is a subinterval $[i,j]\subseteq[1,\ell]$ such that, letting $v$ be the initial vertex of $I_n$, all the vertices $v\cdot s_i, v\cdot s_is_{i+1}, \ldots, v\cdot s_is_{i+1}\cdots s_{j-1}$ are internal vertices of $I_n$ and $v\cdot s_is_{i+1}\cdots s_j$ is the final vertex of $I_n$. By abuse of notation we think of a traverse of $I_n$ as a subword $s_i s_{i+1}\cdots s_j$, though identical subwords can appear as different traverses.

For a word $w\in S^*$, define
\begin{equation}\label{eq:traversegf}
  F_w(t)=\sum_{n=0}^\infty\#\{\text{traverses of $I_n$ for the word $w$}\}t^n.
\end{equation}
Note that this is a polynomial in $t$: the number of non-zero summands is finite.

\begin{proposition}
\label{pr:traverses}
Suppose that there are $t_0>1$ and a function $\Phi(t)$ such that $F_w(t)\le |w|\cdot \Phi(t)$ for all $w\in S^*$ and all $t<t_0$. Set $\beta=\limsup_{n\to\infty}|I_n|^{1/n}$. Then the growth of $G$ satisfies
\[\gamma_G(R)\preceq e^{R^\alpha}\]
for all $\alpha>\frac{\log\beta}{\log\beta+\log t_0}$.
\end{proposition}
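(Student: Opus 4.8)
The plan is to feed a bound on $N_p$ into Corollary~\ref{cor:growth}. Concretely, I will show that for every $p$ with $1<p<1+\frac{\log t_0}{\log\beta}$ there is a constant $K_p$, independent of the word, with $N_p(w)\le K_p|w|^{1/p}$ for all $w\in S^*$. Granting this, $N_p(R)\le K_pR^{1/p}$, so Corollary~\ref{cor:growth} gives $\gamma_G(R)\le R^{K K_p R^{1/p}}=\exp\!\big(KK_pR^{1/p}\log R\big)$ with $K=K(p)$ from that corollary; since $R^{1/p}\log R=o(R^\alpha)$ whenever $\alpha>1/p$, this gives $\gamma_G(R)\preceq e^{R^\alpha}$. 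As $p\uparrow 1+\frac{\log t_0}{\log\beta}$ the threshold $1/p$ decreases to $\frac{\log\beta}{\log\beta+\log t_0}$, so every $\alpha$ strictly above this value is reached; for $\alpha\ge1$ there is nothing to prove, $\gamma_G$ being at most exponential.

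\textbf{Scale decomposition and the key estimate.} Fix $w$ and split the standard portrait $\mathcal P_w$ by the scale of the radius: for $n\ge0$ let $\mathcal P_w^{(n)}$ collect the isomorphism classes of $(B_v(R_w(v)),v,v\cdot w)$ with $R_w(v)\in[|I_n|,|I_{n+1}|)$, and let $\mathcal P_w^{\mathrm{sm}}$ collect those with $R_w(v)<|I_c|$, where $c$ is a constant fixed below. Since $\Gamma$ is a chain, $\#B_v(R)\asymp R$, so $\#B\asymp|I_n|$ on $\mathcal P_w^{(n)}$, while $\#\mathcal P_w^{\mathrm{sm}}$ and all radii occurring in it are bounded by constants. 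Hence
\[N_p(w)^p=\sum_{(B,x,y)\in\mathcal P_w}(\#B)^{p-1}\ \le\ C'\Big(1+\sum_{n\ge c}|I_n|^{p-1}\,\#\mathcal P_w^{(n)}\Big).\]
The crucial point is the bound
\[\#\mathcal P_w^{(n)}\ \le\ A\cdot\Big(\#\{\text{traverses of }I_{n-c}\text{ for }w\}+\#\{\text{traverses of }I_{n-c}\text{ for }w^{-1}\}\Big)\qquad(n\ge c),\]
for a constant $c$ depending only on the linear repetitivity constant and on $C_1$. To see it, take a representative $(B_v(R_w(v)),v,v\cdot w)$ of a class in $\mathcal P_w^{(n)}$: the trajectory of $w$ from $v$ reaches a vertex $v_k$ with $d(v,v_k)=R_w(v)\ge|I_n|$; if $C_1^{c}$ is large enough in terms of the linear repetitivity constant, that constant places a copy of $I_{n-c}$ strictly between $v$ and $v_k$ on the chain, and the trajectory, going from $v$ to $v_k$, must cross it from one endpoint to the other. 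Transporting this crossing back through the direction-preserving isomorphism between that copy and $I_{n-c}$ exhibits a subword that is a traverse of $I_{n-c}$ for $w$ — or for $w^{-1}$, if $v_k$ lies on the other side of $v$. One then checks that this traverse, together with its position in the word, determines the isomorphism class of $(B_v(R_w(v)),v,v\cdot w)$ up to a bounded number of possibilities, which yields the displayed inequality.

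\textbf{Summation.} Granting the estimate, apply the hypothesis of the proposition to $w$ and to $w^{-1}$, choose $\beta'>\beta$ with $\beta'^{\,p-1}<t_0$ (possible precisely because $p-1<\frac{\log t_0}{\log\beta}$), and use $|I_n|\le C_{\beta'}\beta'^{\,n}$ and $|I_n|\le C_2^{\,c}|I_{n-c}|$ for $n\ge c$. Reindexing and recognising the geometric sums as the values of the polynomials $F_w$ and $F_{w^{-1}}$ at $t=\beta'^{\,p-1}<t_0$,
\[\sum_{n\ge c}|I_n|^{p-1}\#\mathcal P_w^{(n)}\ \le\ A\,C_2^{\,c(p-1)}C_{\beta'}^{\,p-1}\Big(F_w(\beta'^{\,p-1})+F_{w^{-1}}(\beta'^{\,p-1})\Big)\ \le\ A'\,|w|\,\Phi(\beta'^{\,p-1}).\]
Combining with the previous display, $N_p(w)^p\le K_p^{\,p}|w|$ for $|w|\ge1$, which is the claim.

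\textbf{Main obstacle.} The step I expect to be the genuine difficulty is the bounded multiplicity in the estimate for $\#\mathcal P_w^{(n)}$: one must show that reconstructing the portrait element from the extracted traverse only costs a bounded factor. A long one-sided excursion of the trajectory would defeat this, letting the part of $B_v(R_w(v))$ lying beyond the trajectory vary over roughly $|I_n|$ isomorphism types; ruling such excursions out on the relevant scales is presumably where the back-tracking forced by the hypothesis $F_w(t)\le|w|\Phi(t)$ with $t_0>1$ has to be used. Pinning down $c$ and making this multiplicity argument precise is the technical core; the rest is the bookkeeping above.
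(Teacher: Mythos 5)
Your overall route is the same as the paper's: for each $p$ with $\beta'^{\,p-1}<t_0$ (where $\beta'>\beta$), compare $N_p(w)^p$ summand-by-summand with $F_w(\beta'^{\,p-1})$, deduce $N_p(w)^p\le C|w|$ from the hypothesis, and feed $N_p(R)\preceq R^{1/p}$ into Corollary~\ref{cor:growth}; your exponent bookkeeping at the end is correct, and your explicit treatment of reversed crossings via $w^{-1}$ (legitimate since $S$ is symmetric) is a point the paper glosses over. The gap is the one you flag yourself: the bounded multiplicity of the map from portrait elements to traverses is asserted but not proved, and it is the entire content of the proposition beyond bookkeeping. Moreover, your diagnosis of how to close it points the wrong way. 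The hypothesis $F_w(t)\le|w|\Phi(t)$ plays no role in the multiplicity bound --- it is used exactly once, at the very end, to evaluate $F_w$ at $t=\beta'^{\,p-1}<t_0$ --- and ``long one-sided excursions'' are not the issue: once the center $v$ is recovered, the whole ball $B_v(R_w(v))$, including the half not covered by the trajectory, is determined by $v$, $w$ and $\Gamma$, so there is no residual freedom there. The genuine difficulty is that recovering $v$ from the subinterval $[i,j]$ requires knowing \emph{which} copy of $I_{n-c}$ was crossed, and a priori unboundedly many copies (hence unboundedly many centers, possibly of distinct isomorphism types) could yield the same $[i,j]$ after transport to $I_{n-c}$.

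The missing idea, which is how the paper closes this, is to invoke linear repetitivity at the scale of the whole ball rather than only at the scale of the traverse: choose a reference segment $I_m$ from the fixed sequence that contains an isomorphic copy of all of $B$ with $|I_m|/\#B$ uniformly bounded, and select the copy of $I_n$ to be traversed from a fixed disjoint family of copies inside $I_m$ with gaps at most $L|I_n|$, arranging $(2L+1)|I_n|$ to be smaller than the radius of $B$ so that such a copy lies in the traversed half. Since $m-n$ is uniformly bounded and the chosen copies are disjoint, there are boundedly many of them; the traverse together with the (boundedly ambiguous) choice of copy pins down a vertex of the trajectory at a known time, hence the center, hence the isomorphism class of $(B,x,y)$. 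This makes the map at most $C$-to-one with $C$ independent of $w$ and $n$, after which your scale decomposition and geometric summation go through essentially verbatim.
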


\begin{proof}
  Let $(B, x, y)$ be an element of the standard portrait $\mathcal{P}_w$. Then $B$ is a segment with center $x$ and one endpoint $y$, such that the half of $B$ containing $y$ is covered by the trajectory $w$ starting in $x$. There exists $m$ such that $I_m$ contains an isomorphic copy of $B$ and $|I_m|/\#B$ is uniformly bounded. There also exists $n$ such that we can choose copies of $I_n$ inside $I_m$ with gaps between consecutive appearances of length not greater than $L|I_n|$, the radius of $B$ is greater than $(2L+1)|I_n|$, and the ratio $\#B/|I_n|$ is bounded. Then the copy of $(B, x, y)$ in $I_m$ contains a copy $I_n'$ of $I_n$ in the half covered by the trajectory starting in $x$; let us choose arbitrarily one such copy $I'_n$. This produces a traverse $\chi(B, x, y)$ of $I_n'$ \emph{corresponding} to the element $(B, x, y)$ of the portrait: if the path traced by $w$ in $B$ first enters $I_n'$ at time $i$, and first exits $I_n'$ at time $j$, this traverse is the interval $[i,j]$. We have defined a map $\chi\subset\mathcal P_w\times\{\text{traverses}\subseteq[1,\ell]\}$.

Note that if we know the traverse and the corresponding copy $I_n'$ of $I_n$ inside of $I_m$, then we know $(B, x, y)$, since we know then a vertex of $w$'s trajectory.
Since the ratio $|I_m|/|I_n|$ is uniformly bounded, the difference $m-n$ is uniformly bounded, and the chosen copies of $I_n$ are disjoint, the number of chosen copies of $I_n$ inside $I_m$ is uniformly bounded. It follows that there is a constant $C$ such the map $\chi$ is at most $C$-to-one. The map puts into correspondence to a summand $(\#B)^{p-1}$ in the definition of $N_p(w)^p$ a summand $t^n$ in the definition of $F_w(t)$.

Let $\beta_1$ be an arbitrary number greater than $\beta$. Then there exists $K>0$ such that $|I_n|\le K\beta_1^n$ for all $n\ge 1$. If we replace $t$ by $\beta_1^{p-1}$, then the map $\chi$ associates a summand $(\#B)^{p-1}$ of $N_p(w)^p$ to a summand $\beta_1^{n(p-1)}$ of $F_w(\beta_1^{p-1})$. The ratio of the summands $(\#B)^{p-1}/\beta_1^{n(p-1)}$ satisfies
\[\frac{(\#B)^{p-1}}{\beta_1^{n(p-1)}}=\left(\frac{\#B}{\beta_1^n}\right)^{p-1}\le\left(K\cdot\frac{\#B}{|I_n|}\right)^{p-1},\]
hence is bounded by a constant not depending on $n$ nor $w$. Since the map $\chi$ is at most $C$-to-one, there exists $C_1(p)$ such that
\[N_p(w)^p\le C_1(p) F_w(\beta^{p-1}).\]
Consequently, \[N_p(w)^p\le C_2(p)\cdot |w|\] for some function $C_2(p)$ and all $p$ such that $\beta_1^{p-1}<t_0$, i.e., for all $p<1+\frac{\log t_0}{\log\beta_1}$.
It follows from Corollary~\ref{cor:growth} that the growth of $G$ satisfies $\gamma_G(R)\preceq e^{R^\alpha}$ for all $\alpha>(1+\frac{\log t_0}{\log\beta})^{-1}$.
\end{proof}

\section{Groups with a purely non-Hausdorff singularity}

We improve here one of the two main theorems of~\cite{nek:burnside} (and give a shorter proof). Let us first recall some notions related to an action of a group $G$ on a topological space $\mathcal X$. For $\xi\in\mathcal X$ we denote by $G_\xi$ the stabilizer of $\xi$, and by $G_{(\xi)}$ the group of elements $g\in G$ such that $g$ acts trivially in a neighborhood of $\xi$. The point $\xi$ is \emph{regular} if $G_\xi=G_{(\xi)}$ and \emph{singular} otherwise; in all cases, $G_{(\xi)}$ is a normal subgroup of $G_\xi$, and the \emph{group of germs} of $\xi$ is the quotient $G_\xi/G_{(\xi)}$.

For $\xi\in\mathcal X$, the orbital graph of $\xi$ is the action graph $\Gamma_\xi$ on the coset space $G_\xi\backslash G$, while the \emph{graph of germs} of $\xi$ is the action graph $\widetilde\Gamma_\xi$ on the coset space $G_{(\xi)}\backslash G$. It is a Galois covering of the orbital graph with group of deck transformations isomorphic to the group of germs $G_\xi/G_{(\xi)}$.

\begin{theorem}\label{thm:nek} Let $G$ be a group generated by a finite set $S$ of involutions and acting faithfully on a Cantor set $\mathcal X$. Suppose that there exists a singular point $\xi\in\mathcal X$ such that
\begin{enumerate}
\item the orbital graphs of regular points are linearly repetitive;
\item the orbital graphs of regular points are quasi-isometric to $\R$, while the orbital graph of $\xi$ is quasi-isometric to $[0, \infty)$;
\item the group of germs of $\xi$ is finite and for every $g\in G_\xi$ the interior of the set of fixed points of $g$ accumulates on $\xi$.
\end{enumerate}
Then there exists $\alpha\in (0, 1)$ such that the growth of $G$ satisfies
\[\gamma_G(R)\preceq \exp\left(R^\alpha\right).\]
\end{theorem}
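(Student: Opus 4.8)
The plan is to deduce the bound from Proposition~\ref{pr:traverses}, applied to an orbital graph quasi-isometric to $\R$ — say the orbital graph $\Gamma_\eta$ of a regular point $\eta$, which by hypotheses~(1),(2) is linearly repetitive with linear ball growth, so it meets the standing conditions of Section~2 (Lemma~\ref{lem:delta} and Corollary~\ref{cor:growth} apply). The action of $G$ on $\eta\cdot G$ is faithful, since linear repetitivity makes the system minimal, so $\eta\cdot G$ is dense in $\mathcal X$ and an element acting trivially on it acts trivially on $\mathcal X$. The reason a regular orbital graph is already enough is that $\Gamma_\eta$ still ``sees'' the singularity: because the group of germs at $\xi$ is finite and, by~(3), every nontrivial germ is a limit of trivial germs (the ``purely non-Hausdorff'' hypothesis), the graph of germs $\widetilde\Gamma_\xi$ is a limit of orbital graphs $\Gamma_{\eta_k}$ for $\eta_k\to\xi$, and a bounded neighbourhood of the fibre over $\xi$ in $\widetilde\Gamma_\xi$ is a fixed ``cap'' pattern $P$ — the cap of the half-line $\Gamma_\xi$, unfolded by the deck group — in which the central fibre is a cut set. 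Hence $P$, and its scaled-up versions, occur in $\Gamma_\eta$ quasiperiodically.

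Next I would set up the U-turn. Fix a geometric sequence of segments $I_n$ with $|I_n|\asymp\beta^n$, each straddling an occurrence of $P$ so that the copy of $P$'s centre sits in the interior, together with segments $J_n\supset I_n$ of comparable length whose two endpoints lie on opposite sides of that centre. For a fixed word $w$, each traverse of $J_n$ contains, as a subword, a distinguished (say, the first) traverse of $I_n$; this produces an injection from the traverses of $J_n$ to the traverses of $I_n$. The key claim — the one place hypothesis~(3) enters — is that this injection misses a fixed proportion of the traverses of $I_n$: a traverse of $I_n$ that reaches the centre of $P$ and then turns back cannot begin a traverse of $J_n$, because the latter must pass through the centre, and crossing the centre forces the prefix of $w$ read so far to be a lift of an element of $G_\xi$ carrying a point at distance $\asymp\beta^n$ from $\xi$ to one arbitrarily close to $\xi$; by~(3) such an element fixes open sets accumulating on $\xi$, so it cannot keep pushing inward, and the path must U-turn and exit $J_n$ through its entrance. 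Quantifying this penetration-versus-backtrack trade-off gives $\#\{\text{traverses of }I_{n+1}\}\le\rho\,\#\{\text{traverses of }I_n\}$ for a fixed $\rho<1$ and all large $n$.

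The smallest segment $I_0$ admits at most $|w|$ traverses (for each starting index there is at most one), so iterating the inequality gives $F_w(t)=\sum_{n}\#\{\text{traverses of }I_n\}\,t^n\le|w|\cdot\Phi(t)$ for every $t<t_0:=1/\rho$, with $\Phi$ a fixed rational function and $t_0>1$. Proposition~\ref{pr:traverses} then gives $\gamma_G(R)\preceq\exp\!\left(R^\alpha\right)$ for all $\alpha>\frac{\log\beta}{\log\beta+\log t_0}$, and the right-hand exponent is $<1$ because $\beta>1$ and $t_0>1$; this is the assertion of the theorem.

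I expect the real difficulty to be the U-turn step: turning the topological hypothesis~(3) into the combinatorial fact that a trajectory entering the cap must back out a comparable distance before escaping, uniformly in $w$. One must (a) describe the orbital graphs precisely near $\xi$ — finiteness of the germ group bounds the cap, while the ``limit of trivial germs'' property certifies that the unfolded cap $P$ is truly a barrier, so no path threads straight through it — and (b) show the injection of traverses loses a definite fraction at \emph{every} scale, not just once, so that the resulting decay is geometric and $t_0>1$. The faithfulness and admissibility bookkeeping of the first paragraph, and the choice of the pair $I_n\subset J_n$ keeping injectivity and the U-turn simultaneously in force, should by comparison be routine.
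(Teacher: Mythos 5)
Your overall strategy --- reduce to Proposition~\ref{pr:traverses} by proving $F_w(t)\le|w|\cdot\Phi(t)$ for some $t_0>1$, via an injection of traverses and a ``U-turn'' effect --- is the right one and matches the paper's. But the one step that carries all the content, namely the claim that the injection from traverses of $J_n$ into traverses of $I_n$ misses a \emph{definite proportion} of them, uniformly in $n$ and in $w$, is asserted rather than proved, and the justification you offer does not work. Hypothesis~(3) says that each $g\in G_\xi$ has a fixed-point set whose interior accumulates on $\xi$; it does not say that a path which has penetrated deep into the cap ``cannot keep pushing inward''. Many traverses do thread straight through the cap (otherwise the orbital graphs would be disconnected); the problem is to bound from below the fraction that do not, and a single nested pair $I_n\subset J_n$ with a ``first traverse'' injection only yields the monotonicity $\#\Theta_{n+1}\le\#\Theta_n$, i.e.\ $t_0=1$, which gives no growth bound at all.

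The missing idea is to classify traverses by the germ $h\in H\setminus\{1\}$ with which they exit the cap, where $H$ is the finite group of germs at $\xi$, and to use several intermediate coverings simultaneously. For each choice of $\kappa\colon H\setminus\{1\}\to\{0,1\}$ one forms the ``piece'' $\bigcap_h P_h^{\kappa(h)}$, where $P_h^0$ is the interior of the fixed set of $h$ (nonempty precisely because of~(3)) and $P_h^1$ is the moved set; each nonempty piece $P_i$ determines an epimorphism $\pi_i\colon H\to\Z/2\Z$ and a $\Z/2$-cover $\Lambda_i\to\Gamma_\xi$ arising as a limit of regular orbital graphs. A walk traversing $I_n$ restricts, for each $i$, to a traverse of the preimage $D_{i,n-1}\subset\Lambda_i$ of $I_{n-1}$, and that traverse lifts to a level-$(n-1)$ walk exiting with a germ $h'\notin\ker\pi_i$ (a walk exiting with $h'\in\ker\pi_i$ would U-turn in $\Lambda_i$ and not traverse $D_{i,n-1}$). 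This gives $\#\Theta_n\le\sum_{h\notin\ker\pi_i}\#\Theta_{n-1,h}$ for every $i$. Since by~(3) every nontrivial $h$ lies in at least one $\ker\pi_i$, summing over the $d$ pieces yields $d\,\#\Theta_n\le(d-1)\,\#\Theta_{n-1}$, i.e.\ geometric decay with ratio $\tfrac{d-1}{d}$ and $t_0=\tfrac{d}{d-1}>1$. This bookkeeping over germ labels and over several coverings at once is exactly what a single unlabeled injection cannot detect, and it is the only place where hypothesis~(3) genuinely enters; without it your argument has no source of strict decay.
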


\begin{proof}
Let $\widetilde\Gamma_\xi$ be the graph of germs of the point $\xi$, and let $H$ be its group of germs. Denote by $P_h^0$ the interior of the set of fixed points of $h\in H$. It is well defined up to taking intersections with neighborhoods of $\xi$, and is always non-empty by the conditions of the theorem. Denote by $P_h^1$ the set of points moved by $h$ (also well defined up to intersection with neighborhoods of $\xi$). The sets $P_h^1$ are non-empty for every $h\in H$, by the definition of the group of germs. Since $H$ is abelian, the sets $P_h^i$ are $H$-invariant. For every choice of a map $\kappa\colon H\to\{0, 1\}$ consider the intersection $\bigcap_{h\in H\setminus\{1\}}P_h^{\kappa(h)}$. We call the obtained sets \emph{pieces}. Each piece is an open $H$-invariant set accumulating on $\xi$. Let $\{P_1, P_2, \ldots, P_d\}$ be the set of all pieces. Every piece defines a map $\pi_i\colon H\to\Z/2\Z$ by the condition that $\pi_i(h)=0$ if $\zeta\cdot h=\zeta$ for all $\zeta$ in the intersection of $P_i$ with some neighborhood of $\xi$, and $\pi_i(h)=1$ otherwise. Note that $\pi_i(h)=1$ implies that $\zeta\cdot h\ne\zeta$ for all points in the intersection of $P_i$ with a sufficiently small neighborhood of $\xi$. It follows from the fact that $H\cong(\Z/2\Z)^k$ that $\pi_i$ are well defined epimorphisms.

Let $P_i$ be a piece. Consider a sequence $\zeta_n\in P_i$ of regular points converging to $\xi$. Then for all $n$ large enough the germ $(h, \zeta_n)$ is trivial if $\pi_i(h)=0$ and is non-trivial if $\pi_i(h)=1$. It follows that the limit $\Lambda_i$ of 
the rooted graphs $(\Gamma_{\xi_n}, \xi_n)$ is the quotient of the graph of germs $\widetilde\Gamma_\xi$ by the subgroup $H_i\coloneqq\ker\pi_i$ of the group of deck transformations $H$ of the covering $\widetilde\Gamma_\xi\to\Gamma_\xi$. We get an associated sequence of Galois coverings $\widetilde\Gamma_\xi\to\Lambda_i\to\Gamma_\xi$ with groups of deck transformations $H_i$ and $H/H_i\cong\Z/2\Z$, respectively.

Choose a quasi-isometry $\phi\colon\Gamma_\xi\to [0, \infty)$. We assume that $\phi(\xi)=0$ and that all values of $\phi$ on vertices of $\Gamma_\xi$ are integers.
There exists $L$ such that $|\phi(v)-\phi(v\cdot s)|$ is less than $L$ for all vertices $v$ and all generators $s$. Let $L'$ be such that if $|\phi(v)-\phi(u)|\le L$, then distance between $v$ and $u$ is less than $L'$.

Define $I_0=\phi^{-1}([0, L_0])$, where $L_0$ is big enough (to be chosen later). Let us define sets $I_n=\phi^{-1}([0, L_n]$ inductively in the following way. Suppose that $L_n$ was defined. Consider the preimages $D_{i, n}$, $i=1, 2, \ldots, d$, of $I_n$ under the covering map $\Lambda_i\to\Gamma_\xi$. Then, by linear repetitivity, there exists $N$ such that the graph induced on $\phi^{-1}([0, N])$ contains the $L_0$-neighborhoods of copies of $D_{i, n}$ for every $i=1, 2, \ldots, d$. Moreover, we can choose $N$ such that $N\le C_1L_n$ for some constant $C_1$, using linear repetitivity and the fact that $\phi$ is a quasi-isometry. Define then $L_{n+1}=N$, and $I_{n+1}=\phi^{-1}([0, L_{n+1}])$. We will have then $|I_n|\le C_2A^n$ for some constants $C_2, A>1$.

Consider a word $w=s_1s_2\cdots s_\ell\in S^*$, remembering that $S$ is symmetric. A \emph{walk} associated with $w$ in a Schreier graph of $G$ is given by a triple $(i, j, v)$, where $1\le i\le j\le\ell$ and $v$ is a vertex of the graph. Its \emph{trajectory} is the path
\[v, v s_i, v s_i s_{i+1}, \ldots, v s_is_{i+1}\cdots s_j\]
in the Schreier graph. The vertices $v$ and $v s_is_{i+1}\cdots s_j$ are the \emph{initial} and \emph{final} vertices of the walk. A \emph{subwalk} of the walk is a walk of the form $(i', j', v')$, where $i\le i'\le j'\le j$ and $v'=v s_i\cdots s_{j'-1}$. The trajectory of a subwalk is a subpath of the trajectory of the walk.

For $n\in\N$ and $h\in H\setminus\{1\}$, denote by $\Theta_{n,h}$ the set  of walks in $\Gamma_\xi$ associated with $w$ such that the initial and final vertices are outside of $I_n$, all the other vertices belong to $I_n$, and such that if $\tilde v$ is a preimage of the initial vertex $v$ in the covering graph $\widetilde\Gamma_\xi$, then $\tilde v\cdot h$ and $\tilde v\cdot s_is_{i+1}\cdots s_j$ are at distance at most $L'$. In other words, the lift of the walk starts in a branch of $\widetilde\Gamma_\xi$ corresponding to $1\in H$ and exits in the branch corresponding to $h\in H$, see Figure~\ref{fig:proofgrowth}.

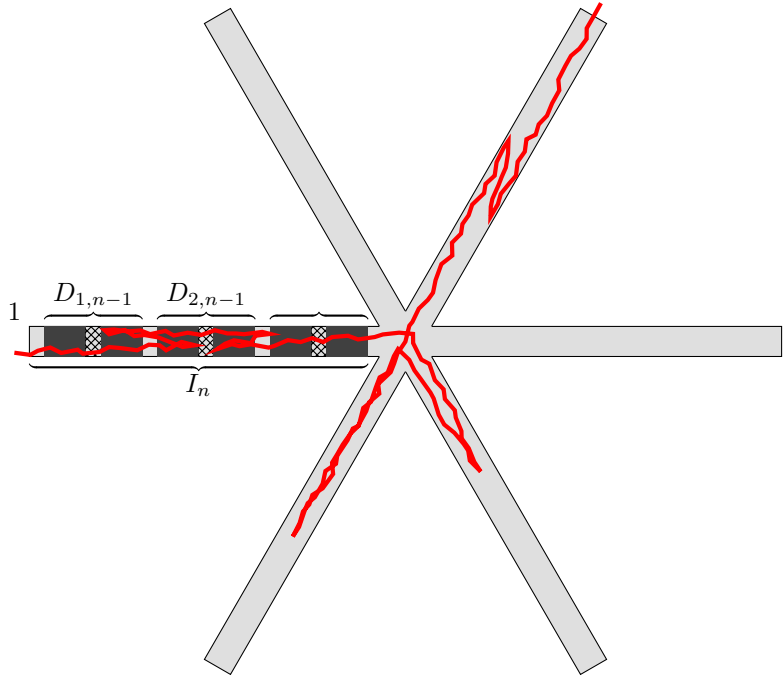
\begin{figure}
  \begin{tikzpicture}
    \foreach\i in {0,60,...,300} {
      \filldraw[fill=lightgray!50,rotate=\i] (0,-0.2) -- ++(5,0) -- ++(0,0.4) -- ++(-5,0);
    }
    \fill[lightgray!50] (0,0) circle (4mm);
    \node at (-5.2,0.4) {$1$};
    \draw [thick,decorate,decoration={calligraphic brace}] (-0.5,-0.3) -- node[below] {$I_n$} (-5,-0.3);
    \draw [thick,decorate,decoration={calligraphic brace}] (-4.8,0.3) -- node[above] {$D_{1,n-1}$} ++(1.3,0);
    \draw [thick,decorate,decoration={calligraphic brace}] (-3.3,0.3) -- node[above] {$D_{2,n-1}$} ++(1.3,0);
    \draw [thick,decorate,decoration={calligraphic brace}] (-1.8,0.3) -- ++(1.3,0);
    \foreach\x in {-4.8,-3.3,-1.8} {
      \fill[darkgray] (\x,0.2) rectangle ++(0.55,-0.4);
      \fill[pattern=crosshatch] (\x+0.55,0.2) rectangle ++(0.2,-0.4);
      \fill[darkgray] (\x+0.75,0.2) rectangle ++(0.55,-0.4);
      
    }
    \draw[decorate,decoration={random steps,segment length=4.5pt,amplitude=1.5pt},red,ultra thick] (-5.2,-0.15) -- (-2.8,-0.05) -- (-4,0.15) -- (-1.8,0.1) -- (-2.5,-0.1) -- (0.1,0.1) -- (300:2) -- (-0.1,-0.1) -- (240:3) -- (0,0) -- ($(60:3)+(150:0.15)$) -- ($(60:2)+(330:0.15)$) -- (60:5.2);
  \end{tikzpicture}
\label{fig:proofgrowth}
\caption{The graph of germs $\widetilde\Gamma_\xi$ in the proof of Theorem~\ref{thm:nek}}
\end{figure}

Note that if $x$ and $y$ are the initial and final vertices of an element of $\Theta_{n,h}$, then $\phi(x), \phi(y)\notin [0, L_n]$, but each of them is at distance at most $L$ from a point of $\phi(I_n)\subset[0, L_n]$. It follows that $|\phi(x)-\phi(y)|<L$, hence the distance between $x$ and $y$ inside $\Gamma_\xi$ is less than $L'$. For every preimage $\tilde x$ of $x$ under the covering map $\widetilde\Gamma_\xi\to\Gamma_\xi$ there exists a preimage $\tilde y$ of $y$ such that $|\tilde x-\tilde y|<L'$. Since any two different preimages of $y$ can be mapped to different points of $\Lambda_i$, if we take $L_0$ big enough, then the preimage $\tilde y$ of $y$ will be uniquely determined by the condition $|\tilde x-\tilde y|<L'$. It follows that the sets $\Theta_{n,h}$ are disjoint. Note that there is a uniform bound (not depending on $g$, $h$, or $n$) on the cardinality of the set of possible initial vertices of an element of $\Theta_{n,h}$ for every given $n$. We also write $\Theta_n=\bigcup_{h\in H\setminus\{1\}}\Theta_{n,h}$.

Let us finally write $F(t)=\sum_{n=0}^\infty \#\Theta_n t^n$ the traverse counting generating function as in~\eqref{eq:traversegf}, and its variant $F_h(t)=\sum_{n=0}^\infty \#\Theta_{n,h}t^n$ for every $h\in H$. Since the graphs $\Lambda_i$ are locally contained in every orbital graph of the action $(G, \mathcal X)$, and the action is linearly repetitive, the same arguments as in Proposition~\ref{pr:traverses}, using an inequality $|I_n|\le C_2A^n$, show that it is enough to show that $F(t)\le |w|\cdot \Phi(t)$ for all $w$ and all $t<t_0$ for some $t_0>1$.

The trajectory of every walk $\gamma\in \Theta_{n,h}$ must pass through the copy inside $I_n$ of the preimage $D_{i, n-1}$ of $I_{n-1}$ in $\Lambda_i$.
The corresponding subwalk $\gamma'$ is lifted to a walk $\gamma''$ in $\widetilde\Gamma_\xi$ belonging to $\Theta_{n-1,h'}$ for some $h'\in H$ such that $\pi_i(h')\ne 0$. The walk $\gamma''$  together with its initial vertex uniquely determine $\gamma'$, and the latter uniquely determines $\gamma$ as the unique element of $\Theta_{n,h}$ containing $\gamma'$ as a sub-walk. We get
\[\sum_{h\in H\setminus\{1\}}\#\Theta_{n,h}\le\sum_{h\notin\ker\pi_i} \#\Theta_{n-1,h},\] 
hence
\[t^{-1}(F(t)-\#\Theta_0)\le\sum_{h\notin\ker\pi_i}F_h(t)\]
for every $i=1, 2, \ldots, d$ and every $t>0$.
Since every element of $H$ belongs to at least one kernel $\ker\pi_i$, when we add the inequalities together, we get
\[dt^{-1}(F(t)-\#\Theta_0)\le (d-1)F(t),\]
hence
\[F(t)\le \#\Theta_0\cdot\frac{t^{-1}}{t^{-1}-\frac{d-1}{d}}\le |w|\cdot\frac{t^{-1}}{t^{-1}-\frac{d-1}{d}}\]
for all $t$ for which the denominator on the right-hand side is positive, i.e., for all $t\in\left(0, \frac{d}{d-1}\right)$. This finishes the proof of the theorem.
\end{proof}

\section{Examples}
We compute, for three concrete examples of groups, upper bounds of the form $\exp(n^\alpha)$ on their growth function. The first example is given as an illustration of the method, recovering the optimal bound from~\cite{bartholdi:upperbd}, while the other two give examples of virtually simple groups of intermediate word growth.

We summarize the general strategy. In all cases, there will be a group $G$ acting on the Cantor set, and the Schreier graph $\Gamma$ of a singular point $\xi$ is a half-line (with multiple edges and loops). Let $\tilde\Gamma$ be the graph of germs of the action at $\xi$. We will consider subintervals $I_n\subset\Gamma$ and the corresponding subgraphs $\tilde I_n\subset\Gamma_n$, and derive relations between the traverses of $I_n$ and $\tilde I_n$.

More generally, our tools are the following. Recall that a word $s_1\dots s_\ell$ is always under consideration, and considered fixed throughout the argument. For a bi-rooted graph $(I,x,y)$, denote by $\Theta(I,x,y)$ its set of traverses, namely of intervals $[i,j]$ such that $x s_i\cdots s_j=y$ and the path $x,x s_i,\dots,y$ remains entirely inside $I$. For a rooted graph $(\tilde I,\tilde x)$ and a graph covering $\phi\colon(\tilde I,\tilde x)\to(I,x)$, we have
\begin{equation}\label{eq:cover}
  \Theta(I,x,y)=\sum_{\tilde y\in\phi^{-1}(y)}\Theta(\tilde I,\tilde x,\tilde y);
\end{equation}
indeed every traverse of $I$ may be uniquely lifted to a traverse of $\tilde I$ that ends in some well-defined preimage of $w$. Next, for a bi-rooted graph $(I',x',y')$ with $I'\subset I$ and such that $I\setminus I'$ is separated in two connected components, one containing $\{x,x'\}$ and one containing $\{y,y'\}$, there is an embedding
\begin{equation}\label{eq:restrict}
  \iota_{I,I'}\colon\Theta(I,x,y)\hookrightarrow\Theta(I',x',y')
\end{equation}
defined as follows: $\iota([i,j])=[i',j']$ with $i'\le j$ maximal such that $x s_i\dots s_{i'}=x'$ and $j'>i'$ minimal such that $x s_i\dots s_j'=y'$; in other words, $[i',j']$ is the last traverse of $I'$ within the traverse $[i,j]$. The two relations~\eqref{eq:cover} and~\eqref{eq:restrict} will be sufficient to obtain all our upper bounds.

\subsection{The first Grigorchuk group}
Recall that the first Grigorchuk group~\cite{grigorchuk:80_en,grigorchuk:milnor_en} is a group $G=\langle a,b,c,d\rangle$ acting on $\{0,1\}^\infty$, recursively as
\begin{xalignat*}{2}
(0w)\cdot a &=1w,	& (1w)\cdot a &=0w,\\
(0w)\cdot b &=0(w\cdot a),	& (1w)\cdot b &=1(w\cdot c),\\
(0w)\cdot c &=0(w\cdot a),	& (1w)\cdot c &=1(w\cdot d),\\
(0w)\cdot d &=0w,	& (1w)\cdot d &=1(w\cdot b).
\end{xalignat*}
This, as well as the other examples, is an instance of an \emph{automaton group}. It may be presented by a finite state automaton, namely a finite directed graph with an input and output label in $\{0,1\}$ on each edge; each vertex $s$ defines a transformation of $\{0,1\}^\infty$ as follows: given $\eta\in\{0,1\}^\infty$, there will exist a unique path in the graph whose input labels read $\eta$; the output labels on the same path read a sequence $\eta\cdot s$. The group $G$ is the group generated by these transformations as $s$ ranges over the vertices of the automaton in Figure~\ref{fig:grigaut}.
\begin{figure}
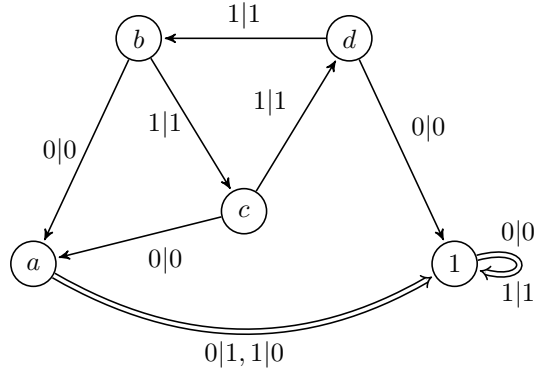

\[\begin{fsa}
  \node[state] (b) at (1.4,3) {$b$};
  \node[state] (d) at (4.2,3) {$d$};
  \node[state] (c) at (2.8,0.7) {$c$};
  \node[state] (a) at (0,0) {$a$};
  \node[state] (e) at (5.6,0) {$1$};
  \path (b) edge node[left] {$1|1$} (c) edge node[left] {$0|0$} (a)
        (c) edge node {$1|1$} (d) edge node {$0|0$} (a)
        (d) edge node[above] {$1|1$} (b) edge node {$0|0$} (e)
        (a) edge[-implies,double,bend right=30] node[below] {$0|1,1|0$} (e)
        (e) edge[-implies,double,loop right] node[above=1mm] {$0|0$} node[below=1mm] {$1|1$} (e);
      \end{fsa}
\]
\label{fig:grigaut}
\caption{The automaton generating the Grigorchuk group}
\end{figure}

The Schreier graph of $G$ may be constructed as follows: define segments $I_n$ recursively by
\[I_1=0\sedge{a}1, \qquad I_{n+1}=I_n^{-1}e_n I_n\quad\text{for }n\ge1,\]
where the edges $e_n$ are 
\begin{equation}\label{eq:grigedges}
  e_{3k}=\dedge{b}{c, d},\qquad e_{3k+1}=\dedge{d}{b, c},\qquad e_{3k+2}=\dedge{c}{b, d}.
\end{equation}
Note that it follows by induction that the segments $I_n$ are symmetric, so there is not need in writing inverses. It also follows that $I_n$ is a beginning of $I_{n+1}$, so that in the limit we get a right-infinite ray $\Gamma=I_\infty$.

The graph of germs $\widetilde\Gamma=\widetilde I_\infty$ of $1^\infty$ is obtained by taking four copies of $I_\infty$ and connecting their origins by the Cayley graph of $\{1, b, c, d\}$, see~\cite{vorob:schreiergraphs}. We label one of copies of $I_\infty$ in $\widetilde I_\infty$ by the identity element of $\{1, b, c, d\}$, and the remaining three copies by the corresponding non-trivial elements $b, c, d$.

Let $\widetilde I_n$ be the central part of the graph of germs of $1^\infty$ obtained by connecting four copies of $I_{n-1}\subset I_\infty$ at their origin.

Choose as our initial (entrance) vertex $v_0$ the boundary point of $\widetilde I_n$ in the ray $I_\infty$ labeled by the identity element of $\{1, b, c, d\}$. The three remaining boundary points correspond then to $b, c, d$ and are the exit vertices of traverses.

Let us write $\Theta_n$ for the set of traverses of $I_n$, and for $h\in\{b,c,d\}$ write $\widetilde\Theta_{n,h}$ for the set of traverses of $\widetilde I_n$ from the copy of $I_{n-1}$ labeled $1$ to that labeled $h$. Since $\widetilde I_n$ is a cover of $I_n$, we get by~\eqref{eq:cover}
\[\Theta_n=\bigsqcup_{h\in e_n}\widetilde\Theta_{n,h}.\]
Every traverse of $\widetilde I_n$ must pass through the first branch $I_{n-1}=I_{n-2}e_{n-2} I_{n-2}$ and hence through its parts $I_{n-3}e_{n-2}I_{n-3}$ and $I_{n-4}e_{n-2}I_{n-4}$. These in turn lift uniquely to traverses of $\widetilde I_{n-1}, \widetilde I_{n-2}, \widetilde I_{n-3}$ respectively, so we get by~\eqref{eq:restrict}
\[\bigsqcup_{h\in\{b,c,d\}}\widetilde\Theta_{n,h}\hookrightarrow\bigsqcup_{h\in e_{n-2}}\widetilde\Theta_{n-i,h}\text{ for }i=1,2,3,\]
see~\ref{fig:pgrigorch}. Writing $\widetilde T_{n,h}=\#\widetilde\Theta_{n,h}$ and $\widetilde T_n=\sum_{h\in\{b,c,d\}}\widetilde T_{n,h}$ and $T_n=\#\Theta_n$, we get
\[\widetilde T_{n+i}\le\sum_{h\in e_{n-2+i}}\widetilde T_n(h).\]

\begin{figure}
  \begin{tikzpicture}
    \def\w{5}
    \draw[thin] (-\w-0.5,0) node[left] {$1$} -- node[below] {$I_{n-1}$} (-0.5,0) -- (\w+0.5,0) node[right] {$b$} (-\w-0.5,-1) node[left] {$c$} -- (\w+0.5,-1) node[right] {$d$} (-0.5,0) -- ++(1,-1) -- ++(0,1) -- ++(-1,-1) -- ++(0,1);
    \node[anchor=east] at (-\w-1,-0.5) {$\widetilde I_n$};
    \foreach\i/\l in {1/5,2/3.5,3/2} {
      \draw[thin] (-3.0-0.5*\l,\i) -- +(\l,0) ++(0,-0.5) -- +(\l,0) (-3.25,\i) -- ++(0.5,-0.5) -- ++(0,0.5) -- ++(-0.5,-0.5) -- ++(0,0.5);
      \node[anchor=east] at (-6,-0.25+\i) {$\widetilde I_{n-\i}$};
    };
    \draw[very thick] (-5.5,0) -- (-0.5,0)
    (-5.5,1) -- (-0.5,1)
    (-4.75,2) -- (-3.25,2) -- ++(0,-0.5) -- (-4.75,1.5)
    (-4,3) -- (-3.25,3) -- ++(0.5,-0.5) -- (-2,2.5);
  \end{tikzpicture}
\label{fig:pgrigorch}
\caption{Traverses in the Grigorchuk group}
\end{figure}
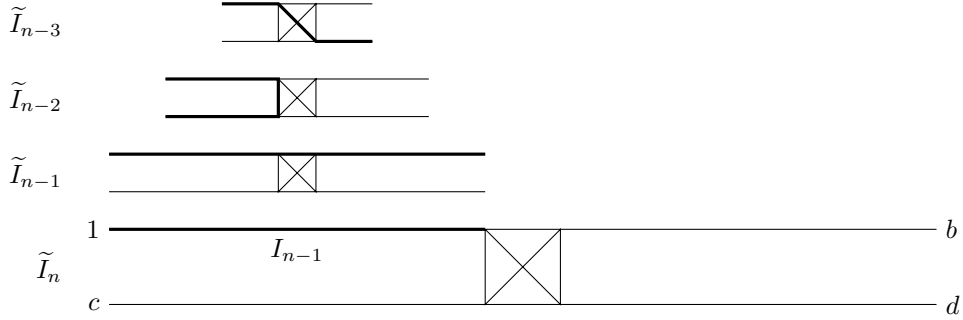

With the generating functions
\[F_h(t)=\sum_{n=0}^\infty  \widetilde T_{n,h} t^n,\qquad F(t)=\sum_{h\in\{b, c, d\}}\widetilde F_h(t),\]
and remembering that $T(n)$ is at most $|w|$, we get
\[t^{-i}\big(F(t)-C_i(t)|w|\big) \le \sum_{h\in e_{n-2+i}} F_h(t)\text{ for }i=1,2,3,\]
for some polynomials $C_i$ not depending on $w$ and all positive $t$.
Adding these inequalities together, and noting that every element $t\in \{b, c, d\}$ appears exactly twice in the labels $e_{n-1}, e_{n-2}, e_{n-3}$, we get
\[(t^{-1}+t^{-2}+t^{-3})F(t)-C(t)|w| \le 2F(t),\]
hence
\[F(t)\le |w|\cdot\frac{C(t)}{t^{-1}+t^{-2}+t^{-3}-2}\]
for some function $C(t)$ and all $t>0$ such that the denominator on the right-hand side is positive, i.e., for all $t<\eta^{-1}$, where $\eta$ is the positive root $\eta\approx 0.81054$ of $x^3+x^2+x-2$. By Proposition~\ref{pr:traverses} this implies that the growth of the Grigorchuk group is $\preceq \exp(R^\alpha)$ for every $\alpha>\frac{\log 2}{\log 2-\log\eta}\approx 0.76743$.

\subsection{An example from the golden mean rotation}
We consider next the example from~\cite[\S8]{nek:burnside}. We repeat briefly the group's definition. Consider first the ``golden mean shift'': it is the space $\mathcal X$ of sequences over the alphabet $\{0,1\}$ with no consecutive $11$. Define then bijections $a_i,b_i,c_i,d_i$ of $\mathcal X$ recursively by the formulas
\begin{xalignat*}{3}
  (00w)\cdot a_0 &= 10w, & (10w)\cdot a_0 &= 00w, & (010w)\cdot a_0 &= 010w,\\
  (00w)\cdot b_0 &= 10w, & (10w)\cdot b_0 &= 00w, & (010w)\cdot b_0 &= 010(w\cdot c_0),\\
  (00w)\cdot c_0 &= 10w, & (10w)\cdot c_0 &= 00w, & (010w)\cdot c_0 &= 010(w\cdot d_0),\\
  (00w)\cdot d_0 &= 00w, & (10w)\cdot d_0 &= 10w, & (010w)\cdot d_0 &= 010(w\cdot b_0),\\
  (0w)\cdot x_1 &= 0(w\cdot x_0), & (10w)\cdot x_1 &= 10w, & \text{ for } & x\in\{a,b,c,d\},\\
  (0w)\cdot x_2 &= 0w, & (10w)\cdot x_2 &= 10(w\cdot x_0),
\end{xalignat*}
or alternatively by the automaton in Figure~\ref{fig:goldenautomaton}, in which the action of states $x_1$ and $x_2$ should be extended by the identity where they are not defined.
\begin{figure}
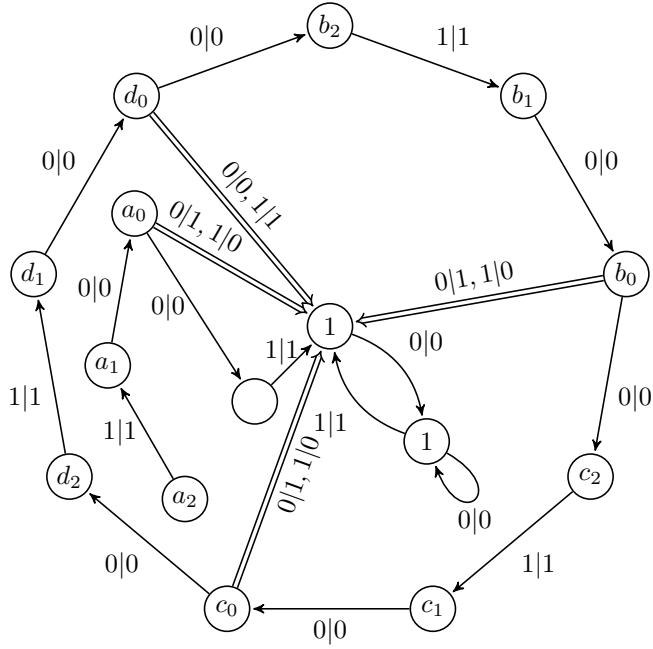
\label{fig:goldenautomaton}
\[\begin{fsa}
    \foreach\name/\angle/\lab in {b_2/90/b2,b_1/50/b1,b_0/10/b0,c_2/330/c2,c_1/290/c1,c_0/250/c0,d_2/210/d2,d_1/170/d1,d_0/130/d0} {
      \node[state,label=center:$\name$] (\lab) at (\angle:4) {};
    }
    \node[state] (e0) at (0,0) {$1$};
    \node[state] (e1) at (310:2) {$1$};
    \node[state] (x2) at (-1,-1) {};
    \node[state,label=center:$a_0$] (a0) at (150:3) {};
    \node[state,label=center:$a_1$] (a1) at (190:3) {};
    \node[state,label=center:$a_2$] (a2) at (230:3) {};
    \foreach\x/\y in {b/c,c/d,d/b} {
      \draw (\x2) edge node {$1|1$} (\x1);
      \draw (\x1) edge node {$0|0$} (\x0);
      \draw (\x0) edge node {$0|0$} (\y2);
    }
    \draw (a2) edge node[left] {$1|1$} (a1)
    (a1) edge node[left] {$0|0$} (a0)
    (a0) edge node[left] {$0|0$} (x2);
    \draw (x2) edge node[above,near start] {$1|1$} (e0)
    (a0) edge[-implies,double] node[sloped,near start] {$0|1,1|0$} (e0)
    (d0) edge[-implies,double] node[sloped] {$0|0,1|1$} (e0)
    (b0) edge[-implies,double] node[sloped] {$0|1,1|0$} (e0)
    (c0) edge[-implies,double] node[sloped,below] {$0|1,1|0$} (e0)
    (e0) edge[bend left] node {$0|0$} (e1)
    (e1) edge[out=330,in=290,loop] node[below] {$0|0$} (e1)
    edge[bend left] node {$1|1$} (e0);
  \end{fsa}
\]
\caption{The automaton generating the virtually simple torsion group of intermediate growth based on the golden mean rotation}
\end{figure}

Let us define intervals $I_n$ by the recurrence
\[I_n=I_{n-2}^{-1}e_{n-2}I_{n-1}^{-1}\text{ for }n\ge2\]
with $I_0=I_1$ the length-$0$ interval and
where (for $k>0$ and $i\in\{0, 1, 2\}$) the double edge $e_{3k+i}$ is labeled by
\[\begin{cases} b_i, c_i &\text{if }k\equiv 0\pmod{3},\\
    b_i, d_i &\text{if }k\equiv 1\pmod{3},\\
    c_i, d_i &\text{if }k\equiv 2\pmod{3}.\end{cases}
\]
(For $k=0$ there is an additional edge $a_i$). These define the labelings of the non-loop edges of segments in the Schreier graph; if there is no exiting edge labeled $s$ at a vertex, then this vertex is fixed by $s$.

There are three singular points $\xi_0=(010)^\infty$, $\xi_1=(001)^\infty$ and $\xi_2=(100)^\infty$. Their respective Schreier graphs are, for $i\in\{0,1,2\}$, the inductive limits of embeddings of $I_{3k+i}$ to the left end of $I_{3(k+1)+i}$. (Note that $I_n=I_{n-3}e_{n-4}I_{n-4}e_{n-2}I_{n-1}^{-1}$.) The graph of germs is obtained by connecting four copies of them by the Cayley graph of the four-group $\{1, b_i, c_i, d_i\}$. The beginnings of the orbit graphs of $\xi_i$ are given in Figure~\ref{fig:goldenorbits}.
\begin{figure}\label{fig:goldenorbits}
\[\begin{tikzpicture}[every node/.style={inner sep=1pt},>=stealth']
    \foreach\i/\t in {0/\xi_0,1/000,2/100,3/101,4/001,5/\scriptscriptstyle001000,6/\scriptscriptstyle101000,7/\scriptscriptstyle100000,8/\scriptscriptstyle000000,9/\scriptscriptstyle010000,10/\scriptscriptstyle010100,11/\scriptscriptstyle000100,12/\scriptscriptstyle100100} {
      \node[label={[label distance=1pt,rotate=-45]right:$\t$}] (x\i) at (\i,3) {$\cdot$};
    }
    \foreach\i/\t in {0/\xi_2,1/000,2/010,3/\scriptscriptstyle010000,4/\scriptscriptstyle000000,5/\scriptscriptstyle100000,6/\scriptscriptstyle101000,7/\scriptscriptstyle001000,8/\scriptscriptstyle001010,9/\scriptscriptstyle101010,10/\scriptscriptstyle100010,11/\scriptscriptstyle000010,12/\scriptscriptstyle010010} {
      \node[label={[label distance=1pt,rotate=-45]right:$\t$}] (y\i) at (\i,1.5) {$\cdot$};
    }
    \foreach\i/\t in {0/\xi_1,1/101,2/100,3/000,4/010,5/\scriptscriptstyle010101,6/\scriptscriptstyle000101,7/\scriptscriptstyle100101,8/\scriptscriptstyle100100,9/\scriptscriptstyle000100,10/\scriptscriptstyle010100,11/\scriptscriptstyle010000,12/\scriptscriptstyle000000} {
      \node[label={[label distance=1pt,rotate=-45]right:$\t$}] (z\i) at (\i,0) {$\cdot$};
    }
    \foreach\i/\j/\t in {x0/x1/x_1,x1/x2/x_0,x2/x3/x_2,x3/x4/x_0,x4/x5/x_1,x5/x6/x_0,x6/x7/x_2,x7/x8/x_0,x8/x9/x_1,x9/x10/x_0,x10/x11/x_1,x11/x12/x_0,
      y0/y1/x_0,y1/y2/x_1,y2/y3/x_0,y3/y4/x_1,y4/y5/x_0,y5/y6/x_2,y6/y7/x_0,y7/y8/x_1,y8/y9/x_0,y9/y10/x_2,y10/y11/x_0,y11/y12/x_1,
      z0/z1/x_0,z1/z2/x_2,z2/z3/x_0,z3/z4/x_1,z4/z5/x_0,z5/z6/x_1,z6/z7/x_0,z7/z8/x_2,z8/z9/x_0,z9/z10/x_1,z10/z11/x_0,z11/z12/x_1} {
      \draw (\i) -- node[above] {$\t$} (\j);
    }
%    \foreach\i/\j in {x0/y0,x2/y1,x12/y7,y0/z0,y1/z0,y7/z4,z4/x2} \draw[->] (\i) -- (\j);
%    \draw (z0) edge[bend left,->] (x0);
    \draw [thick,decorate,decoration={calligraphic brace}] (0,1.8) -- node[above=3pt] {$I_2$} (1,1.8);
    \draw [thick,decorate,decoration={calligraphic brace}] (0,3.3) -- node[above=3pt] {$I_3$} (2,3.3);
    \draw [thick,decorate,decoration={calligraphic brace}] (4,-0.6) -- node[below=3pt] {$I_4$} (0,-0.6);
    \draw [thick,decorate,decoration={calligraphic brace}] (7,0.9) -- node[below=3pt] {$I_5$} (0,0.9);
    \draw [thick,decorate,decoration={calligraphic brace}] (0,3.8) -- node[above=3pt] {$I_6$} (12,3.8);

  \end{tikzpicture}
\]
\caption{The beginnings of the orbits of the critical rays $\xi_0,\xi_1,\xi_2$ under the fragmentation of the golden mean rotation, where a horizontal edge $x_i$ corresponds to a generator among $a_i,b_i,c_i,d_i$. On row $j$ we abbreviate `$w$' for the sequence `$w\xi_j$'.}
\end{figure}
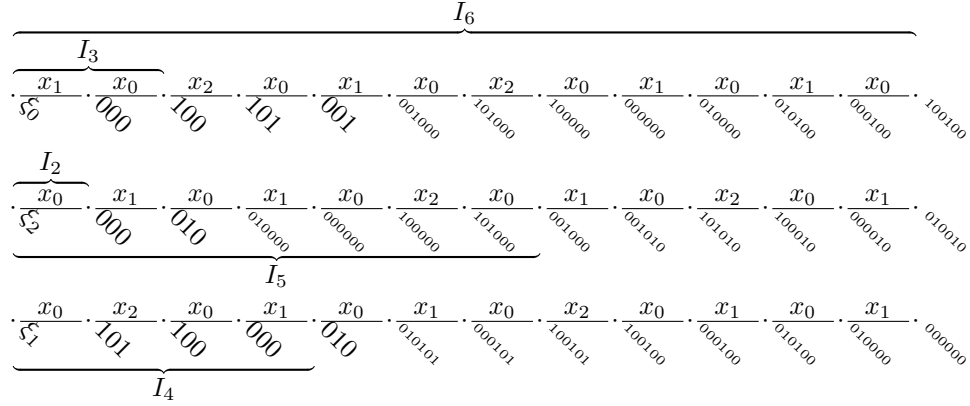

The situation is similar to that of the Grigorchuk group, except that we must consider three singular points instead of one.
Denote by $T_{3k+i,b_i}$, $T_{3k+i,c_i}$, $T_{3k+1,d_i}$ the numbers of traverses of the central part of the graph of germs formed by the four copies of $I_{3k+i}$ starting in the branch corresponding to the identity and ending in the branch of corresponding to $b_i, c_i, d_i$, respectively.  Note
\[I_n=(I_{n-2}^{-1}e_{n-2}I_{n-2})e_{n-3}I_{n-3}\]
so every traverse of $I_n^{-1}e_nI_n$ must pass through $I_{n-2}^{-1}e_{n-2}I_{n-2}$.

It follows that a traverse counted in $T_{3k+i,b_i}$ restricts to traverses counted in $\sum_{h\in e_{3k+i-2}}T_{3k+i-2,h}$, and similarly in $\sum_{h\in e_{3k+i-2}} T_{3k+i-5,h}$ and in $\sum_{h\in e_{3k+i-2}} T_{3k+i-8,h}$. Defining as before the generating functions $F_i(t)=\sum_{k=0}^\infty\sum_{h\in e_i}T_{3k+i,g_i}t^{3k+i}$, we obtain
\[(t^{-8}+t^{-5}+t^{-2})(F_i(t)-C_i(t)|w|)\le 2 F_{i-2}(t).\]
Summing over $i=0,1,2$, we get
\[(t^{-8}+t^{-5}+t^{-2})(F(t)-C(t)|w|)\le 2F(t),\]
which implies the growth estimate $\gamma(R)\preceq\exp(R^{0.9181})$.

\subsection{A simple group containing the Grigorchuk group}
The following example is studied in~\cite{nek:simplegrowth}, where it is shown that it is a virtually simple torsion group of intermediate growth containing the first Grigorchuk group.

Consider the space $\mathcal X$ of sequences over the alphabet $\{0_0, 0_1, 1\}$ such that every two-letter subword is in the set $\{0_00_1, 0_10_0, 0_11, 10_0, 10_1, 11\}$; so the $\{0_0,0_1\}$ letters alternate on every block of $0$'s, ending with $0_1$.
The map $P\colon\mathcal X\to\{0, 1\}^\omega$ erasing the indices of $0_0$ and $0_1$ is a continuous surjection such that $P^{-1}(w)$ is a singleton except when $w$ is eventually equal to the constant $0$ sequence. 

The action of the Grigorchuk group on $\{0, 1\}^\omega$ naturally lifts by $P$ to an action on $\mathcal X$. Namely, we have
\[(0_xw)\cdot a=1w,\qquad (10_xw)\cdot a=0_{1-x}0_xw,\qquad (11w)\cdot a=0_11w\]
and
\begin{xalignat*}{2}
  (0_{1-x}0_xw)\cdot b&=0_1(0_xw\cdot a),&(0_110_xw)\cdot b&=0_x(10_xw\cdot a),\\
  (0_111w)\cdot b&=0_0(11w\cdot a),&(1w)\cdot b&=1(w\cdot c),\\
  (0_{1-x}0_xw)\cdot c&=0_1(0_xw\cdot a),&(0_110_xw)\cdot c&=0_x(10_xw\cdot a),\\
  (0_111w)\cdot c&=0_0(11w\cdot a),&(1w)\cdot c&=1(w\cdot d),\\
  (0_xw)\cdot d&=0_xw,&(1w)\cdot d&=1(w\cdot b).
\end{xalignat*}

Let us decompose $a$ into a product of two disjoint involutions:
\[10_1w\stackrel{a_0}{\longleftrightarrow}0_00_1w,\qquad 1xw\stackrel{a_1}{\longleftrightarrow}0_1xw\text{ if }x\in\{0_0,1\}.\]
Denote by $\widehat G$ the group generated by $\{a_0,a_1,b,c,d\}$. It obviously contains the Grigorchuk group as the subgroup $\langle a_0a_1,b,c,d\rangle$. In fact, the orbital graphs of the Grigorchuk group are obtained from the orbital graphs of $\widehat G$ by replacing each label $0_x$ by $0$. It is given by the automaton in Figure~\ref{fig:simplegrigaut}.
\begin{figure}
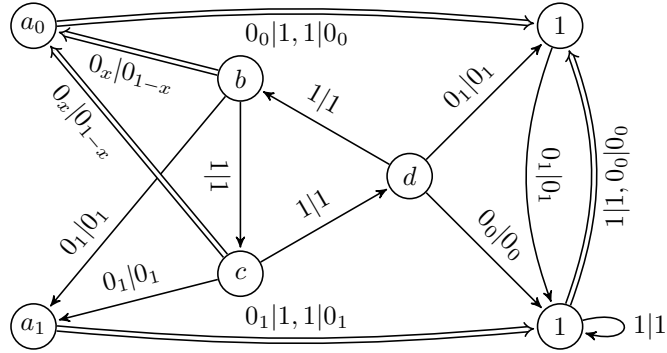

\[\begin{fsa}[every node/.style={sloped}]
  \node[state] (b) at (120:1.5) {$b$};
  \node[state] (d) at (0:1.5) {$d$};
  \node[state] (c) at (240:1.5) {$c$};
  \node[state,label=center:$a_0$] (a0) at (-3.5,2) {};
  \node[state,label=center:$a_1$] (a1) at (-3.5,-2) {};
  \node[state] (e0) at (3.5,-2) {$1$};
  \node[state] (e1) at (3.5,2) {$1$};
  \path (b) edge node[below] {$1|1$} (c) edge[-implies,double] node[below] {$0_x|0_{1-x}$} (a0) edge node[pos=0.7] {$0_1|0_1$} (a1)
        (c) edge node {$1|1$} (d) edge[-implies,double] node[below,pos=0.7] {$0_x|0_{1-x}$} (a0) edge node[above] {$0_1|0_1$} (a1)
        (d) edge node[above,pos=0.6] {$1|1$} (b) edge node {$0_0|0_0$} (e0) edge node {$0_1|0_1$} (e1)
        (a0) edge[-implies,double,bend left=5] node[below] {$0_0|1,1|0_0$} (e1)
        (a1) edge[-implies,double,bend right=5] node[above] {$0_1|1,1|0_1$} (e0)
        (e0) edge[loop right] node[sloped=false] {$1|1$} () edge[-implies,double,bend right=20] node[below] {$1|1,0_0|0_0$} (e1)
        (e1) edge[bend right=20] node {$0_1|0_1$} (e0);
      \end{fsa}
\]
\label{fig:simplegrigaut}
\caption{The automaton generating a virtually simple torsion group containing the Grigorchuk group}
\end{figure}

Segments of the orbital graphs of $\widehat G$ are constructed using the following rules:
\begin{xalignat*}{2}
  I_1 &= 0_0\sedge{a_0}1, & J_1 &= 0_1\sedge{a_1}1,\\
  I_{n+1} &= J_n e_n J_n^{-1}, & J_{n+1} &= J_n e_n I_n^{-1}\text{ for all }n\ge1,
\end{xalignat*}
where $e_n$ are the same as for the Grigorchuk group, see~\eqref{eq:grigedges}, and again loops are not indicated. Note that $I_n$ is symmetric.

The orbital graph of $1^\infty$ is isomorphic to the right-infinite ray equal to the natural direct limit of the segments $J_n$ with added loops at its origin. We will denote the direct limit by $J_\infty$. The graph of germs of $1^\infty$ is also obtained by taking four copies of $J_\infty$ and connecting them by the Cayley graph of $\{1, b, c, d\}$.

We repeat the arguments from the estimate of the growth of the Grigorchuk group, where we count the traverses of the central part $\widetilde J_n$ of the graph of germs of $1^\infty$. We have
\[J_n=J_{n-1}e_{n-1}I_{n-1}=J_{n-1}e_{n-1}J_{n-2}e_{n-2}J_{n-2}^{-1},\]
so every traverse of $\widetilde J_n$ passes through segments $I_{n-1}=J_{n-2}e_{n-2}J_{n-2}^{-1}$, $I_{n-2}=J_{n-3}e_{n-3}J_{n-3}^{-1}\subset J_{n-1}$, and $I_{n-3}=J_{n-4}e_{n-4}J_{n-4}^{-1}$.

It follows that the growth of $\widehat G$ is $\preceq\exp(R^\alpha)$ for every $\alpha>\frac{\log 2}{\log 2-\log\eta}\approx 0.83473$, where $\eta\approx 0.87176$ is the root of $x^4+x^3+x^2-2$.

\bibliographystyle{amsalpha}
\bibliography{nekrash,mymath}
\end{document}